\newtheorem{theorem}{Theorem}[section]
\newtheorem{lemma}[theorem]{Lemma}
\newtheorem{corollary}[theorem]{Corollary}
\newtheorem{proposition}[theorem]{Proposition}
\theoremstyle{remark}
\theoremstyle{definition}
\numberwithin{equation}{section}
\DeclareMathOperator{\Cdb}{{\mathbb C}}
\DeclareMathOperator{\Rdb}{{\mathbb R}}
\DeclareMathOperator{\re}{{Re}}
\DeclareMathOperator{\ball}{{Ball}}
\newcommand{\oa}[1]{\operatorname{oa}(#1)}
\newcommand{\joa}[1]{\operatorname{joa}(#1)}
\begin{document}

\title{Jordan operator algebras revisited}
\author{David P. Blecher}
\address{Department of Mathematics, University of Houston, Houston, TX
77204-3008, USA}
\email[David P. Blecher]{dblecher@math.uh.edu}

\author{Zhenhua Wang}
\address{Department of Mathematics, University of Houston, Houston, TX
77204-3008, USA}
\email[Zhenhua Wang]{zhenwang@math.uh.edu}
\keywords{Jordan operator algebra, Jordan Banach algebra, Open projection, Hereditary subalgebra, Approximate identity, JC*-algebra, Operator spaces, $C^*$-envelope, Real positivity, States}
\subjclass[msc2010]{Primary: 17C65, 46L07, 46L70,  46L85, 47L05, 47L30; Secondary  46H10, 46H70, 47L10, 47L75}

\date{\today} 
	
\thanks{Supported by  the Simons Foundation Collaboration 
grant 527078}

\maketitle

\thispagestyle{empty} 
\setcounter{page}{1}


\begin{abstract} Jordan operator algebras are norm-closed spaces of operators
on a Hilbert space with $a^2 \in A$ for all $a \in A$.  In two recent papers by the authors and Neal, 
a theory for these spaces was developed.    It was shown  there that much of  the theory of associative operator algebras,
in particularly surprisingly much of the associative theory from several recent papers of the first author and coauthors,
 generalizes to Jordan  operator algebras.  In the present paper  we complete this task, giving  several  
results which generalize the associative case in these papers,   relating to unitizations, real positivity, hereditary subalgebras,
and a couple of other topics.   We also  solve one of the three open problems stated at the end of our earlier joint paper
on Jordan operator algebras.  
\end{abstract}

\section{Introduction}   A Jordan operator algebra is a  norm-closed linear 
space $A$ of operators
on a Hilbert space, with $a^2 \in A$ for all $a \in A$.  Or equivalently, which is
closed under the `Jordan product' $a \circ b = \frac{1}{2}(ab+ba)$.   In two recent papers \cite{BWj,BNj} 
a theory for this class  was developed.    It was shown  there that much of  the theory of associative operator algebras,
in particularly surprisingly much of the recent associative theory from e.g.\ 
\cite{BHN, BRI, BRII, BRord, BNII}, generalizes to Jordan  operator algebras.  In the present paper  we complete this task, giving  several 
results which generalize the associative case in these cited papers,  relating to unitizations, real positivity, structure of state and quasistate spaces, hereditary subalgebras, 
and a couple of other topics.   We also solve one of the three problems posed at the end of \cite{BWj}.   In that paper
 we observed that a consequence of 
Meyer's unitization theorem (see e.g.\ 2.1.15
in \cite{BLM}) is that the unitization $A^1$ of a general Jordan operator algebra $A$ 
is unique up to isometric isomorphism.   We asked if the unitization of a Jordan operator algebra is unique up to completely isometric isomorphism?    Here we answer this question in the negative. 
This outcome is not too serious for the topics contained in \cite{BWj,BNj} since most results
there only use the norm, cone of `real positive elements', and Jordan algebra structure.   

We now quickly summarize notation (for more details see the introductions to \cite{BWj,BNj}). 
If $A$ is a Jordan operator algebra acting on a Hilbert space $H$,  let $C^*(A)$ be
the $C^*$-algebra generated by $A$ in $B(H)$.   If $x,y \in A$ then $xyx \in A$ (by e.g.\ (1.3) in \cite{BWj}).  
Norm-closed Jordan subalgebras (that is, 
subspaces closed under  the Jordan product) of commutative (associative)
 operator algebras are of course ordinary (commutative associative)
operator algebras on a Hilbert space, and the Jordan product is the
ordinary product.    In particular if $a$ is an element in a
Jordan operator algebra $A$ inside a $C^*$-algebra  $B$,
 then the closed Jordan algebra
generated by $a$ in $A$ equals the closed operator algebra
generated by $a$ in $B$.    We write this as oa$(a)$ or joa$(a)$.   An element $q$ in a Jordan operator algebra $A$
is called a {\em projection} if $q^2 = q$ and $\| q \| = 1$
(so these are just the orthogonal projections on the
Hilbert space $A$ acts on, which are in $A$).     A {\em Jordan homomorphism} between  Jordan operator algebras
 is a linear map  satisfying $T(ab+ba) = T(a) T(b) + T(b) T(a)$ for $a, b \in A$, or equivalently,
that $T(a^2) = T(a)^2$ for all $a \in A$ (the equivalence follows by applying $T$ to $(a+b)^2$).  
 A {\em Jordan ideal} of  a Jordan algebra $A$
is a subspace $E$ with  $\eta \circ \xi \in E$
for $\eta \in E, \xi \in A$.  
A {\em Jordan  contractive approximate identity} (or {\em J-cai} for short)
is a net
$(e_t)$  of contractions with $e_t \circ a \to a$ for all $a \in A$.
It is shown in \cite[Lemma 2.6]{BWj} that if $A$ has a Jordan cai then it has a
{\em partial cai}, that is a net that acts as a cai for the ordinary product in
any containing generated $C^*$-algebra.       If a Jordan (or equivalently, a partial) cai for $A$ exists then $A$ is called {\em approximately unital}.  
 A {\em state} of an approximately unital Jordan 
operator algebra $A$ is a functional with $\Vert \varphi \Vert = \lim_t \, \varphi(e_t) = 1$
for some (or every) J-cai $(e_t)$ for $A$.  These extend to states of the unitization $A^1$.  They also
extend to a state (in the $C^*$-algebraic sense) on any $C^*$-algebra $B$ generated by $A$, and conversely
any state on $B$ restricts to a state of $A$.   Part of this follows easily from the fact mentioned earlier that  any partial cai for $A$ is a cai for $B$. See \cite[Section 2.7]{BWj}
for more  details.

\section{Unitization} 

Interesting examples of Jordan operator algebras can come from spaces of anticommuting operators, such as from a spin system in fermionic
analysis.  This is  related to the  
Cartan factors of type IV (certain selfadjoint spaces of operators whose squares lie in $\Cdb I$),
and the operator space $\Phi_n$ in e.g.\  \cite[Section 9.3]{Pisbk}.    
A linear space of anticommuting operators will of course (by definition of `anticommuting') have zero Jordan product.
A very simple example of this is  completely isometric to the well known operator space $R_2 \cap C_2$, which may be defined to be the
two dimensional Hilbert space with matrix norms the maximum of the `row matrix norm' and the `column matrix norm'  (the matrix norms 
of the first row or first column of the $2 \times 2$ matrix $C^*$-algebra $M_2$).   
Consider the set $E_2$ 
of 
$4 \times 4$ matrices 
$$ \left[ \begin{array}{ccccl}
0  & \alpha & \beta & 0 \\ 0 & 0  & 0  & - \beta \\
0 & 0  & 0  &  \alpha \\
0 & 0  & 0  & 0
\end{array}  \right] , \qquad \alpha, \beta  \in \Cdb. $$
This is not an associative algebra but is a Jordan operator algebra with zero Jordan product.
That is, $xy + yx = 0$ for any two such matrices, which is an anticommutation relation.     Let $G$ be the space of the same matrices but with
first column and last row removed, so that  $G \subset M_3$.
Let $F_2$ be the set of matrices in $M_2(G) \subset M_6$ which are zero  in all of  the 
first three columns and all of the last three rows.  This is an operator algebra with zero  product, which is
linearly completely isometric to $G$.  
So $E_2 \cong F_2$ completely isometrically isomorphically as  Jordan operator algebras, but not of course as associative
operator algebras. 

\begin{proposition} The two unitizations $\Cdb I_4 + E_2$ and  $\Cdb I_6 + F_2$ of
the Jordan operator algebra $E_2$ above, are not completely isometrically
isomorphic as  Jordan operator algebras, nor even as unital operator spaces.   
\end{proposition}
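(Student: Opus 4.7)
The plan is to find a matrix-norm invariant distinguishing the two unital operator spaces. The key algebraic fact is that every element of $E_2$ (resp.\ $F_2$) squares to zero: for $n = \alpha e_1 + \beta e_2 \in E_2$ one has $n^2 = \alpha\beta(e_1 e_2 + e_2 e_1) + \alpha^2 e_1^2 + \beta^2 e_2^2 = 0$ using $e_i^2 = 0$ and the anticommutation relation, while $F_2 \cdot F_2 = 0$ by inspection of supports. From this, by putting $N$ with $N^2 = 0$ into the canonical form $\bigl[\begin{smallmatrix}0 & D \\ 0 & 0\end{smallmatrix}\bigr]$ relative to the decomposition $\mathrm{Ran}(N) \oplus (\ker N \ominus \mathrm{Ran}(N)) \oplus \ker(N)^\perp$, diagonalizing $D$ via an SVD, and computing the norm of the resulting uncoupled $2 \times 2$ Jordan-style blocks, one obtains
\[
\|\lambda I + N\|^{2} \;=\; |\lambda|^{2} + \tfrac{1}{2}\|N\|^{2} + \|N\|\sqrt{\tfrac{1}{4}\|N\|^{2} + |\lambda|^{2}}, \qquad \lambda \in \Cdb,\ N^{2} = 0.
\]

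I would first argue that any unital isometric linear map $\Phi \colon A_1 \to A_2$ must send $E_2$ into $F_2$. Writing $\Phi(e_i) = \lambda_i I_6 + f_i$ with $f_i \in F_2$, the displayed formula renders $\|\alpha I_4 + \beta e_1 + \gamma e_2\|^{2}$ a strictly increasing function only of $|\alpha|^{2}$ once $\beta,\gamma$ are fixed, whereas $\|\alpha I_6 + \beta \Phi(e_1) + \gamma \Phi(e_2)\|^{2}$ is the same type of function of $|\alpha + c|^{2}$ with $c = \beta\lambda_1 + \gamma\lambda_2$. Letting $\alpha$ traverse the circle $|\alpha| = r$ for a fixed $r > 0$, the source norm is constant but the image norm varies unless $c = 0$. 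Hence $\beta\lambda_1 + \gamma\lambda_2 = 0$ for all $\beta,\gamma$, forcing $\lambda_1 = \lambda_2 = 0$; equivalently $\Phi(E_2) = F_2$.

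Next I would exhibit the distinguishing element at the $M_2$ level. Set $A = E_{12}$, $B = E_{21}$ in $M_2$ and $X = A \otimes e_1 + B \otimes e_2 \in M_2(E_2)$. Since $\|[A\ B]\| = 1 = \bigl\|\bigl[\begin{smallmatrix}A \\ B\end{smallmatrix}\bigr]\bigr\|$ and $E_2$ is completely isometric to $R_2 \cap C_2$, we have $\|X\| = 1$. Viewing $X$ inside $M_2(M_4) = M_8$, its nonzero entries lie at the positions $(1,6),(3,8),(5,3),(6,4)$, so a direct computation yields $(I_8 + X)(e_3 + e_8) = 2 e_3 + e_5 + e_8$, whence $\|I_8 + X\|^{2} \ge 6/2 = 3$.

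Finally, suppose a unital complete isometry $\Phi$ exists. The induced $\Phi_2$ at the $M_2$ level is an isometry, and by the second paragraph $\Phi_2(X) \in M_2(F_2)$ with $\|\Phi_2(X)\| = 1$. Because $M_2(F_2) \cdot M_2(F_2) = 0$ we have $\Phi_2(X)^{2} = 0$, and the formula above gives $\|I + \Phi_2(X)\|^{2} = 1 + \tfrac{1}{2} + \sqrt{5/4} = (3+\sqrt{5})/2$. Since $\sqrt{3} > (1+\sqrt{5})/2$, this contradicts $\|I + X\| = \|I + \Phi_2(X)\|$. The main obstacle is controlling the form of $\Phi$ on the non-unit part; once that is pinned down by the norm-formula argument, the matrix-norm contradiction at $n = 2$ is immediate.
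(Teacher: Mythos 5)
Your proof is correct, and it takes a genuinely different route from the paper's. The paper distinguishes the two unitizations by a $C^*$-envelope dimension count: $\Cdb I_4 + E_2 \subset M_4$ forces $\dim C^*_e(\Cdb I_4 + E_2) \leq 16$, while a generation argument plus the fact that $R_2 \cap C_2$ is not completely isometric to $R_2$ shows $C^*_e(\Cdb I_6 + F_2) = M_3 \oplus M_3$, which is $18$-dimensional. You instead build an explicit numerical obstruction at the second matrix level. I checked the ingredients: every element of $E_2$ and of $M_n(F_2)$ is square-zero (the latter by corner support), your norm formula for $\lambda I + N$ with $N^2 = 0$ is correct, and the rotation-in-$\alpha$ argument correctly forces any unital isometry to carry $E_2$ into $F_2$ (a step that is genuinely needed and that you rightly do not skip). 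The element $X = E_{12} \otimes e_1 + E_{21} \otimes e_2$ does the job because $X^2 = (E_{22} - E_{11}) \otimes e_1 e_2 \neq 0$ even though $E_2$ has zero Jordan product; your entry positions are right, $XX^*$ is a projection so $\|X\| = 1$, and the test vector gives $\|I_8 + X\| \geq \sqrt{3}$, while any norm-one square-zero $Y$ has $\|I + Y\| = (1+\sqrt{5})/2 < \sqrt{3}$. What your approach buys is that it is elementary and self-contained: it avoids the $C^*$-envelope machinery and the external input that $R_2 \cap C_2 \not\cong R_2$ completely isometrically, and it quantifies the failure of complete isometry already at $n = 2$. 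The paper's approach is shorter on the page and identifies the $C^*$-envelopes themselves, which is information of independent interest.
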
 

\begin{proof}. The $C^*$-envelope
of $\Cdb I_4 + E_2$  is at most 16 dimensional since $\Cdb I_4 + E_2
\subset M_4$ (we shall not need this but it is easy to see that $C^*_e(\Cdb I_4 + E_2)$ is  16 dimensional).
We shall show that the dimension
of the $C^*$-envelope
of $\Cdb I_6 + F_2$  is 18.     Indeed $\Cdb I_6+ F_2$ is (via a switch of columns and rows)
completely isometrically isomorphic to the unital subalgebra $A$ of $M_3 \oplus M_3$ consisting of matrices 
$$ \left[ \begin{array}{cccl}
\lambda  & \alpha & \beta \\ 0 & \lambda  & 0 \\
0 & 0  & \lambda 
\end{array}  \right]  \; \oplus
\; \left[ \begin{array}{cccl}
\lambda  & 0 & -\beta \\ 0 & \lambda  &  \alpha \\
0 & 0  & \lambda 
\end{array}  \right]   \; \in \; M_3 \oplus M_3 , \qquad \lambda, \alpha, \beta  \in \Cdb. $$
We claim that $C^*_e(A) = M_3 \oplus M_3$.   Indeed the $*$-algebra generated by $A$ 
contains $E_{12} + E_{56}, E_{13} - E_{46}, (E_{12} + E_{56})(E_{31} - E_{64}) = - E_{54},$ hence also 
$E_{45}, E_{45}(E_{12} + E_{56}) = E_{46},$ and thus $E_{56}.$  Indeed it contains
 $E_{ij}$ for $4 \leq i, j \leq 6.$   Hence also $E_{12}, E_{1,3}$, 
 indeed $E_{ij}$ for $1 \leq i, j \leq 3.$   So $A$ generates $M_3 \oplus M_3$.   Let $J$ be a nontrivial
 ideal in  $M_3 \oplus M_3$, such that the canonical map  $A \to (M_3 \oplus M_3)/J$ is a complete
isometry.   We may assume $J = 0 \oplus M_3$, the contrary case being similar.   That is
the canonical compression of $A$ to its first $3 \times 3$ block (that is, multiplication on $A$ by $I_3 \oplus 0$)
is completely isometric.   Note that this implies that $R_2 \cap C_2$ is completely isometric to $R_2$,
which is known to be false. Thus $J = (0)$, so that  $C^*_e(A) = M_3 \oplus M_3$ as desired.  
\end{proof}

\section{Real positivity} \label{repo}
  If $A$ is a  
Jordan operator algebra we define ${\mathfrak r}_A$ to be the
elements $x$ in $A$ with positive real part (that is, $x + x^* \geq 0$).   We call these the {\em real positive} or {\em accretive} elements of $A$. 

Suppose that $A$ is an approximately unital Jordan operator algebra acting on a Hilbert space $H$, and let $C^*(A)$ be
the $C^*$-algebra generated by $A$ in $B(H)$.   As we said at the end of the introduction, states  
of $A$ are precisely the restrictions to $A$ of states on $C^{\ast}(A)$.
We write $S(A)$ for the state space of $A$.  
Let $Q(A)$ be the quasistate space of $A,$ namely $Q(A)=\{t\varphi, t\in[0,1], \varphi\in S(A)\}.$   The  quasistates of $A$ are precisely the restrictions to $A$ of quasistates on $C^{\ast}(A)$.  It follows that the quasistate space $Q(A)$ is convex and weak* compact. Indeed if $\varphi_t\in S(C^{\ast}(A)),$ $\lambda_t\in [0,1],$ $\varphi\in A^{\ast},$ and $\lambda_t\varphi_t\to \varphi$ weak* on $A,$ then there exists a $\psi\in Q(C^{\ast}(A))$ and convergent subnets $\lambda_{t_{\mu}}\to \lambda\in[0,1],$ $\varphi_{t_{\mu}}\to \psi$ weak* on $C^{\ast}(A).$ This forces $\varphi=\lambda \psi\in Q(A).$ So $Q(A)$ is weak* compact.   
Note that $S(A)$ is convex, for example by \cite[Lemma 2.21 (4)]{BNj}, and from this it is easy to see that $Q(A)$ is convex.  We set ${\mathfrak F}_A = \{ x \in A: \| 1 - x \| \leq 1 \}$.  Here $1$ is the identity of the unitization $A^1$ if $A$ is nonunital.     We also have (see the end of Section 2.7 in \cite{BWj}) 
$${\mathfrak r}_A=\{x\in A:\re\varphi(x)\geq 0,\ \mbox{for all}\ \varphi\in S(A)\}. $$
Define  $${\mathfrak c}_{A^{\ast}}=\{\varphi\in A^{\ast}:\re\varphi(x)\geq 0,\ \mbox{for all}\ x\in {\mathfrak r}_A\}. $$
Similarly ${\mathfrak c}_{A^{\ast}}^{\Rdb}$ are the real linear functionals that are positive
on ${\mathfrak r}_A$.
The above  are the {\em real positive 
 functionals.}
We will say an approximately unital Jordan operator algebra $A$ is {\em scaled} if ${\mathfrak c}_{A^{\ast}}=\Rdb^+S(A).$

The following results are the 	approximately unital Jordan versions of most of 2-6--2.8 in  \cite{BOZ}.

\begin{proposition} \label{scaled}
	Let $A$ be an approximately unital nonunital Jordan operator algebra.  Then $Q(A)$ is  the weak* closure of $S(A).$	Also, a functional $f \in {\mathfrak c}_{A^{\ast}}$ if and only if $f$ is a nonnegative multiple of  a state.	
That is,  an approximately unital nonunital Jordan operator algebra is scaled.    
\end{proposition}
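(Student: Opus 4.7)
The plan is to reduce Part 1 to the classical C*-algebra fact and then derive the scaled conclusion via a Hahn-Banach separation argument in $A^*$.

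For the first assertion, I would first observe that $C^*(A)$ is itself nonunital: by \cite[Lemma 2.6]{BWj} a partial cai $(e_t)$ for $A$ is an associative cai in $C^*(A)$, so if $C^*(A)$ had a unit then $e_t = e_t \cdot 1 \to 1$ in norm, forcing $1 \in A$, contrary to hypothesis. I would then invoke the classical fact that for a nonunital C*-algebra $B$, $Q(B)$ is the weak* closure of $S(B)$. Since the restriction map $r : C^*(A)^* \to A^*$ is weak*-continuous and sends $S(C^*(A))$ onto $S(A)$ (and hence $Q(C^*(A))$ onto $Q(A)$), and since $Q(C^*(A))$ is weak*-compact,
\[ Q(A) = r\bigl(\overline{S(C^*(A))}^{w^*}\bigr) \subseteq \overline{r(S(C^*(A)))}^{w^*} = \overline{S(A)}^{w^*} \subseteq Q(A), \]
yielding equality.

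For Part 2, the inclusion $\Rdb^+ S(A) \subseteq {\mathfrak c}_{A^*}$ is immediate from the displayed characterization of ${\mathfrak r}_A$ via states. For the reverse, I would let $K = \overline{\Rdb^+ S(A)}^{w^*}$, a weak*-closed convex cone in $A^*$. If some $f \in {\mathfrak c}_{A^*}$ failed to lie in $K$, then Hahn-Banach separation (the weak*-continuous real-linear functionals on $A^*$ being exactly $g \mapsto \re g(a)$ for $a \in A$) would produce $a \in A$ with $\re g(a) \geq 0$ for every $g \in K$ but $\re f(a) < 0$. Specializing $g$ over $S(A) \subseteq K$ and invoking the characterization of ${\mathfrak r}_A$ would force $a \in {\mathfrak r}_A$, contradicting real positivity of $f$. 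Hence $f \in K$.

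The remaining and most delicate step is to show that $K$ actually equals $\Rdb^+ S(A)$. Given a weak*-convergent net $t_\alpha \varphi_\alpha \to f$ with $t_\alpha \geq 0$ and $\varphi_\alpha \in S(A)$, the uniform boundedness principle forces $(t_\alpha)$ to be bounded, so along a subnet $t_\alpha \to t \in [0,\infty)$. If $t = 0$ then $f = 0 \in \Rdb^+ S(A)$; otherwise $\varphi_\alpha \to f/t$ weak*, and the weak*-compactness of $Q(A)$ puts $f/t$ in $Q(A)$. By Part 1, $f/t = s \psi$ for some $s \in [0,1]$ and $\psi \in S(A)$, so $f = (ts)\psi \in \Rdb^+ S(A)$. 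This closedness step — combining Banach-Steinhaus with the compactness of $Q(A)$ already established just before the proposition — is where the nonunital approximately unital hypothesis actually does its work.
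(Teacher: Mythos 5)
Your first assertion and your Hahn--Banach separation step are both fine. Part 1 is essentially the paper's own argument (reduce to the nonunital $C^*$-algebra $C^*(A)$ via the weak*-continuous restriction map, using the weak* compactness of $Q(A)$ noted just before the proposition). Part 2 departs from the paper, which either quotes the argument of Lemma 2.7(1) of Blecher--Ozawa or applies \cite[Lemma 4.13]{BWj} to $\re f$ and recovers $f$ from its real part; your separation argument is a legitimate alternative as far as it goes: since $K=\overline{\Rdb^+S(A)}^{w^*}$ is a weak* closed convex cone and the weak*-continuous real-linear functionals on $A^*$ are exactly $g\mapsto \re g(a)$ for $a\in A$, a point $f\in {\mathfrak c}_{A^*}\setminus K$ would produce (after normalizing the separating constant to $0$, using that $K$ is a cone containing $0$) an element $a$ with $\re g(a)\geq 0$ on $K$ and $\re f(a)<0$; testing on $S(A)\subset K$ and using the displayed description of ${\mathfrak r}_A$ gives $a\in{\mathfrak r}_A$, contradicting $f\in{\mathfrak c}_{A^*}$.

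The gap is in your final ``closedness'' step. A weak*-convergent \emph{net} in $A^*$ need not be norm bounded: the uniform boundedness principle applies to sequences, or to families that are pointwise bounded over a fixed index set, not to arbitrary convergent nets. (In any infinite-dimensional dual space every weak* neighbourhood of $0$ contains a finite-codimensional subspace, hence elements of arbitrarily large norm, so unbounded weak*-convergent nets abound.) Thus you cannot conclude that $(t_\alpha)$ is bounded, and your argument does not establish that $\Rdb^+S(A)$ is weak* closed --- which is precisely what you need to pass from $f\in K$ to $f\in \Rdb^+S(A)$, and is the crux of ``scaled.'' The statement is true, and the correct tool is the Krein--\v{S}mulian theorem: a convex cone in a dual Banach space is weak* closed if and only if its intersection with the closed unit ball is, and here
$$\Rdb^+S(A)\cap \ball(A^*)=\{t\varphi : t\in[0,1],\ \varphi\in S(A)\}=Q(A),$$
since $\Vert t\varphi\Vert=t$ for a state $\varphi$; this set is weak* compact by the discussion preceding the proposition. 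Substituting Krein--\v{S}mulian for the Banach--Steinhaus appeal closes your proof.
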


\begin{proof}  
That  $Q(A)$ is  the weak* closure of $S(A)$ will follow because this is true for $C^*$-algebras
and because of the fact, stated earlier, that states of $A$ are precisely the restrictions to $A$ of states on $C^{\ast}(A)$. 
  The last assertion follows from the fact above that $Q(A)$ is weak* closed and the argument for \cite[Lemma 2.7 (1)]{BOZ}.   Alternatively, if  $f$ is real positive, then $\re f$ is real positive (i.e.\ $\re f({\mathfrak r}_A) \subset [0,\infty)$),   which implies by  \cite[Lemma 4.13]{BWj} that $\re f =\lambda\re \varphi$ for some $\lambda\geq 0$ and $\varphi\in S(A).$ Therefore, $f=\lambda \varphi$ by the uniqueness of the  extension of $\re f.$
\end{proof}

\begin{corollary}
	If $A$ is an approximately unital nonunital Jordan operator algebra, then:
	\begin{itemize}
		\item[(i)] $S(A^1)$ is the convex hull of trivial character $\chi_0$ on $A^1$ (which annihilates $A$) 
and the set of states on $A^1$ extending states of $A.$
		\item[(ii)] $Q(A)=\{\varphi_{\vert_A}: \varphi\in S(A^1)\}.$ 
	\end{itemize}
\end{corollary}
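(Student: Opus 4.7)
The plan is to prove (i) first by analyzing the restriction to $A$ of a general state on $A^1$ via Proposition~\ref{scaled}, and then deduce (ii) almost immediately from (i).

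For (i), the key preliminary observation is that if $\psi \in S(A^1)$ then $\psi|_A$ is a real positive functional on $A$. Indeed, $\psi$ extends to a state $\tilde{\psi}$ on some $C^{\ast}$-algebra $B$ containing $A^1$; for $x \in {\mathfrak r}_A$ we have $x + x^{\ast} \geq 0$ in $B$, and since $\tilde{\psi}$ is positive and selfadjoint, $2\,\re\,\psi(x) = \tilde{\psi}(x + x^{\ast}) \geq 0$. Thus $\psi|_A \in {\mathfrak c}_{A^{\ast}}$, and Proposition~\ref{scaled} gives $\psi|_A = t\varphi$ with $\varphi \in S(A)$ and $t = \Vert \psi|_A \Vert \in [0,1]$. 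Extend $\varphi$ to a state $\tilde{\varphi}$ on $A^1$ (such extensions exist by the discussion at the end of the introduction) and consider $\rho := \psi - t\tilde{\varphi}$ on $A^1$. By construction $\rho|_A = 0$ and $\rho(1) = 1 - t \geq 0$; since $A^1 = A \oplus \Cdb\,1$ as a vector space, this forces $\rho = (1-t)\chi_0$, and therefore $\psi = t\tilde{\varphi} + (1-t)\chi_0$, which is the desired convex decomposition. The reverse containment uses convexity of $S(A^1)$ together with $\chi_0 \in S(A^1)$ (which follows from $|\lambda| \leq \Vert a + \lambda 1 \Vert$ in the unitization) and the fact that a state-extension of a state of $A$ is itself a state of $A^1$.

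Part (ii) then drops out. For the inclusion $\supseteq$, take $\psi \in S(A^1)$ and restrict the decomposition of (i) to $A$: $\psi|_A = t\varphi + 0 \in Q(A)$. For the reverse inclusion, given $t\varphi \in Q(A)$ with $\varphi \in S(A)$ and $t \in [0,1]$, extend $\varphi$ to $\tilde{\varphi} \in S(A^1)$ and observe that $t\tilde{\varphi} + (1-t)\chi_0 \in S(A^1)$ restricts to $t\varphi$ on $A$.

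The main obstacle is the verification that $\psi|_A$ lies in ${\mathfrak c}_{A^{\ast}}$ and that Proposition~\ref{scaled} then produces an honest state $\varphi$ together with the correct scalar $t = \Vert \psi|_A \Vert \in [0,1]$; without the uniqueness of this decomposition the subtraction argument producing $\chi_0$ would be ambiguous. Once this is in hand, everything else is a short bookkeeping computation on the vector space direct sum $A^1 = A \oplus \Cdb\,1$.
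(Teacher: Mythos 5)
Your argument is correct and is essentially the second of the two routes the paper itself indicates: deducing the corollary from the fact that $A$ is scaled (Proposition \ref{scaled}), exactly as in the proof of Lemma 2.7 of \cite{BOZ}, rather than the paper's first route of transferring the known $C^*$-algebra statements via the fact that states of $A$ are precisely restrictions of states of $C^*(A)$. One small remark: the uniqueness of the decomposition $\psi|_A = t\varphi$ that you worry about is not actually needed, since for any such decomposition the difference $\psi - t\tilde{\varphi}$ vanishes on $A$ and takes the value $1-t$ at $1$, hence is forced to be $(1-t)\chi_0$ by the vector space splitting $A^1 = A \oplus \Cdb 1$.
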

\begin{proof}   These follow easily from the fact that they are  true for $C^*$-algebras
and states of $A$ are precisely the restrictions to $A$ of states on $C^{\ast}(A)$.   Or one may deduce them e.g.\ from the fact that $A$ is scaled as in the proof of \cite[Lemma 2.7]{BOZ}.  
\end{proof}

\begin{lemma} {\rm (Cf.\ \cite[Lemma 6.6]{BOZ}.)} \ 
	Suppose $A$ is an approximately unital Jordan operator algebra.
	\begin{itemize}
		\item[(1)] The cones ${\mathfrak c}_{A^{\ast}}$ and ${\mathfrak c}_{A^{\ast}}^{\Rdb}$  are additive (that is, the norm on $A^{\ast}$ is additive on these cones).
		\item[(2)] If $(\varphi_t)$ is an increasing net in ${\mathfrak c}_{A^{\ast}}^{\Rdb}$	which is bounded in norm, then the set converges in norm, and its limits is the least upper bound of the net.
	\end{itemize}	
\end{lemma}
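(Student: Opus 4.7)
The plan is to read off (1) from the scaledness established in Proposition~\ref{scaled} and its real-linear counterpart (via \cite[Lemma 4.13]{BWj}), and then bootstrap (2) from (1) by a monotone Cauchy argument along the lines of \cite[Lemma 6.6]{BOZ}. In the unital case (where Proposition~\ref{scaled} was not stated) the same scaledness follows routinely via the fact, recalled in the introduction, that states of $A$ are precisely the restrictions to $A$ of states of $C^{\ast}(A)$.

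For (1) in the complex case, given $\varphi_1,\varphi_2\in{\mathfrak c}_{A^{\ast}}$, Proposition~\ref{scaled} writes $\varphi_i=\lambda_i\psi_i$ for some $\lambda_i\geq 0$ and $\psi_i\in S(A)$, with $\norm{\varphi_i}=\lambda_i$ since $\psi_i(e_t)\to 1$ along any J-cai $(e_t)$. When $\lambda_1+\lambda_2>0$,
$$ \varphi_1+\varphi_2 \;=\; (\lambda_1+\lambda_2)\,\Bigl(\tfrac{\lambda_1}{\lambda_1+\lambda_2}\psi_1+\tfrac{\lambda_2}{\lambda_1+\lambda_2}\psi_2\Bigr); $$
convexity of $S(A)$ (noted before Proposition~\ref{scaled}) shows the bracketed expression is a state, so $\norm{\varphi_1+\varphi_2}=\lambda_1+\lambda_2=\norm{\varphi_1}+\norm{\varphi_2}$. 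For ${\mathfrak c}_{A^{\ast}}^{\Rdb}$, the alternative argument in the proof of Proposition~\ref{scaled} uses \cite[Lemma 4.13]{BWj} to write each $f\in{\mathfrak c}_{A^{\ast}}^{\Rdb}$ as $\lambda\,\re\varphi$ with $\lambda\geq 0$ and $\varphi\in S(A)$; since $\re\varphi(e_t)\to 1$ we have $\norm{\re\varphi}=1$, and the same convex-combination argument yields additivity.

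For (2), let $(\varphi_t)$ be an increasing norm-bounded net in ${\mathfrak c}_{A^{\ast}}^{\Rdb}$. For $s\geq t$ the difference $\varphi_s-\varphi_t$ lies in ${\mathfrak c}_{A^{\ast}}^{\Rdb}$, so (1) gives $\norm{\varphi_s}=\norm{\varphi_t}+\norm{\varphi_s-\varphi_t}$. The increasing bounded scalar net $(\norm{\varphi_t})$ converges, whence $\norm{\varphi_s-\varphi_t}\to 0$; so $(\varphi_t)$ is norm-Cauchy and converges in norm to some $\varphi$. The cone ${\mathfrak c}_{A^{\ast}}^{\Rdb}$ is norm-closed, being cut out by the continuous conditions $\re f(x)\geq 0$ for $x\in{\mathfrak r}_A$. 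Fixing $t$ and letting $s\to\infty$ therefore yields $\varphi-\varphi_t\in{\mathfrak c}_{A^{\ast}}^{\Rdb}$, i.e.\ $\varphi\geq\varphi_t$. If $\psi\geq\varphi_t$ for all $t$, then $\psi-\varphi=\lim_t(\psi-\varphi_t)$ in norm with each summand in the cone, so $\psi\geq\varphi$, making $\varphi$ the least upper bound.

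The only genuine obstacle is verifying scaledness for both sign conventions (complex and real-linear); once these are in hand the argument is the standard additive-norm/monotone-convergence scheme and the Jordan product plays no further role beyond what is already encoded in $S(A)$ and ${\mathfrak r}_A$.
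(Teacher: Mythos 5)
Your proof is correct and is essentially the paper's argument: the paper simply defers to the proof of \cite[Lemma 6.6]{BOZ} with \cite[Lemma 4.13]{BWj} substituted for the corresponding Banach-algebra ingredient, and that proof is exactly your decomposition of real positive (real-linear) functionals as nonnegative multiples of (real parts of) states, norm additivity via convexity of $S(A)$, and the monotone norm-Cauchy argument using closedness of the cone. The only cosmetic point is that your unital-case aside is unnecessary as stated: the scaledness you need there follows directly from the same appeal to \cite[Lemma 4.13]{BWj} (which applies to all approximately unital, hence also unital, Jordan operator algebras), rather than from the state-restriction fact.
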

\begin{proof}
	This is as in \cite[Lemma 6.6]{BOZ}, however one needs to appeal to \cite[ Lemma 4.13]{BWj} in place of the matching result used there.  
\end{proof}

\begin{corollary} {\rm (Cf.\ \cite[Corollary 6.9]{BOZ}.)} \ 
	Let $A$ be an approximately unital Jordan operator algebra. If $f\leq g\leq h$ in $B(A,\Rdb)$ in the natural `dual ordering' induced
by $\preceq$, then $\Vert g\Vert\leq \Vert f\Vert+\Vert h\Vert.$
\end{corollary}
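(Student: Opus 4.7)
The plan is to reduce the inequality to the additivity of norm on the cone $\mathfrak{c}_{A^{\ast}}^{\Rdb}$ established in part (1) of the previous lemma. By definition of the dual ordering, $f\leq g$ means $g-f\in\mathfrak{c}_{A^{\ast}}^{\Rdb}$ and $g\leq h$ means $h-g\in\mathfrak{c}_{A^{\ast}}^{\Rdb}$, so of course $h-f=(g-f)+(h-g)$ is also in $\mathfrak{c}_{A^{\ast}}^{\Rdb}$.

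Invoking the additivity of the norm on $\mathfrak{c}_{A^{\ast}}^{\Rdb}$ from the previous lemma, we get
$$\Vert h-f\Vert=\Vert g-f\Vert+\Vert h-g\Vert.$$
Next, applying the ordinary triangle inequality twice yields
$$\Vert g\Vert\leq \Vert g-f\Vert+\Vert f\Vert,\qquad \Vert g\Vert\leq \Vert h-g\Vert+\Vert h\Vert.$$
Adding these and substituting the identity above,
$$2\Vert g\Vert\leq \Vert g-f\Vert+\Vert h-g\Vert+\Vert f\Vert+\Vert h\Vert=\Vert h-f\Vert+\Vert f\Vert+\Vert h\Vert.$$
One more use of the triangle inequality gives $\Vert h-f\Vert\leq \Vert f\Vert+\Vert h\Vert$, so the right side is bounded by $2\Vert f\Vert+2\Vert h\Vert$, and division by $2$ yields the claim.

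There is no real obstacle here once the additivity of the cone is in hand; the only subtlety is being careful with the definition of the dual ordering (interpreting $f\preceq g$ as $g-f\in \mathfrak{c}_{A^{\ast}}^{\Rdb}$) so that part (1) of the preceding lemma applies directly. This is exactly the argument of \cite[Corollary 6.9]{BOZ}, now available in the Jordan setting because the preceding lemma furnishes the required additivity.
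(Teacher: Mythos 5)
Your argument is correct, but it takes a genuinely different route from the one in the paper. You work entirely on the dual side: from $g-f,\,h-g\in {\mathfrak c}_{A^{\ast}}^{\Rdb}$ you invoke the norm additivity on that cone from part (1) of the preceding lemma to get $\Vert h-f\Vert=\Vert g-f\Vert+\Vert h-g\Vert$, and then three uses of the triangle inequality yield $2\Vert g\Vert\leq \Vert h-f\Vert+\Vert f\Vert+\Vert h\Vert\leq 2(\Vert f\Vert+\Vert h\Vert)$; every step checks out, and your reading of the dual ordering ($f\leq g$ meaning $g-f$ is nonnegative on ${\mathfrak r}_A$) is the intended one, which is exactly what makes the additivity lemma applicable. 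The paper instead argues on the predual side: for $x\in A$ with $\Vert x\Vert<1$ it writes $x=a-b$ with $a,b\in \frac{1}{2}{\mathfrak F}_A\subset {\rm Ball}(A)$ by \cite[Theorem 4.1 (4)]{BWj}, and then estimates $g(x)=g(a)-g(b)\leq h(a)-f(b)\leq \Vert h\Vert+\Vert f\Vert$ (and symmetrically when $g(x)\leq 0$), using only that $a,b$ are real positive. The trade-off: the paper's proof needs nothing beyond the order definition and the decomposition ${\rm Ball}(A)\subset \frac{1}{2}{\mathfrak F}_A-\frac{1}{2}{\mathfrak F}_A$, so it is independent of the additivity lemma, whose Jordan proof itself rests on the deeper \cite[Lemma 4.13]{BWj}; your proof leans on that nontrivial lemma, but in exchange it is a soft, purely order/norm-theoretic argument in the dual that would transfer verbatim to any setting where the dual cone is additive for the norm. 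One small caution: the proof actually reproduced in this paper is the decomposition argument, so you should not present your route as literally ``the argument of \cite[Corollary 6.9]{BOZ}'' without checking that source; it is better described as an alternative deduction from the additivity lemma.
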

\begin{proof}
	For any $x\in A$ with $\Vert x\Vert<1,$ then by \cite[Theorem 4.1(4)]{BWj} there exists $a, b\in \frac{1}{2}{\mathfrak F}_A\subset\ball (A)$ such that $x=a-b.$ If $g(x)\geq 0,$ then $$g(x)=g(a)-g(b)\leq h(a)-f(b)\leq \Vert h\Vert+\Vert f\Vert.$$ If $g(x)\leq 0,$ then $$\vert g(x)\vert=g(b)-g(a)\leq h(b)-f(a)\leq \Vert h\Vert+\Vert f\Vert.$$ 
	Therefore, $\Vert g\Vert\leq \Vert f\Vert+\Vert h\Vert.$ 	
\end{proof}

 In \cite[Proposition 3.27]{BWj}, we show that if $J$ is an approximately unital closed two-sided Jordan ideal in a
Jordan operator algebra $A,$
then $A/J$ is (completely isometrically isomorphic to)
a Jordan operator algebra.   
The following is the approximately unital Jordan version of  \cite[Corollary 8.9]{BOZ}.    Below $W_A(x) = \{ \varphi(x) : \varphi \in S(A) \}$ is the numerical range of $x$ in $A$.  
 
\begin{proposition} \label{tri}
	Suppose that $J$ is an approximately unital Jordan ideal in a unital Jordan operator algebra $A.$ Let $x\in A/J$ with $K=W_{A/J}(x).$ Then 
	\begin{itemize}
		\item[(i)] If $K$ is not a nontrivial line segment in the plane, then there exists $a\in A$ with $a +J = x, \Vert a\Vert=\Vert x\Vert$, and  $W_A(a)=W_{A/J}(x).$	
		\item[(ii)] If $K=W_{A/J}(x)$ is a nontrivial line segment, let $\hat{K}$ be any thin triangle with $K$ as one of the sides. Then there exists $a\in A$ with $a + J = x, \Vert a\Vert=\Vert x\Vert$, and  $K\subset W_A(a)\subset \hat{K}.$
	\end{itemize}
\end{proposition}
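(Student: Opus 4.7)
The plan is to mirror the proof of \cite[Corollary 8.9]{BOZ}, with associative inputs swapped for the Jordan analogues from \cite{BWj,BNj}. The main conceptual tool is the Bohnenblust-Karlin description
$$W_B(b)\;=\;\bigcap_{\alpha\in\Cdb}\,\{z\in\Cdb:|z-\alpha|\leq\|b-\alpha\|\},$$
valid in any unital Banach algebra and hence in both $A$ and $A/J$; note that $A/J$ is unital (since $J$ is approximately unital) and is a Jordan operator algebra by \cite[Proposition 3.27]{BWj}. Writing $r_\alpha=\|x-\alpha\|_{A/J}$, this yields $K=\bigcap_\alpha\{z:|z-\alpha|\leq r_\alpha\}$. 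For any lift $a\in A$ of $x$ one automatically has $\|a-\alpha\|\geq r_\alpha$ for every $\alpha$, and $K\subset W_A(a)$, since states on $A/J$ extend to $J$-annihilating states on $A$ via the $C^{\ast}(A)$-state correspondence recalled in Section 2.7 of \cite{BWj}.

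Because $K$ is a compact convex subset of $\Cdb$, only finitely many closed disks need to be cut out. In case (i), either $K$ is a single point $\{\beta\}$ (in which case $r_\beta=0$, forcing $x=\beta\cdot 1$ in $A/J$, and then $a=\beta\cdot 1_A$ works) or there exist three centres $\alpha_1,\alpha_2,\alpha_3$ outside $K$ with $K=\bigcap_{i=1}^{3}\{z:|z-\alpha_i|\leq r_{\alpha_i}\}$. In case (ii), three centres positioned off the line supporting $K$, one near each vertex of the thin triangle $\hat K$, give $\hat K=\bigcap_{i=1}^{3}\{z:|z-\alpha_i|\leq r_{\alpha_i}\}$. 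So it is enough to produce one lift $a\in A$ of $x$ with $\|a\|=\|x\|$ and $\|a-\alpha_i\|=r_{\alpha_i}$ simultaneously for $i=1,2,3$; Bohnenblust-Karlin applied to $A$ then forces $W_A(a)\subset K$ in case (i), or $W_A(a)\subset\hat K$ in case (ii), while the reverse containment $K\subset W_A(a)$ is automatic.

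For the simultaneous construction, start from any initial lift $a_0$ of $x$ and a partial cai $(e_t)$ of $J$ (guaranteed by \cite[Lemma 2.6]{BWj}). Set $u_t=1-e_t\in A^1$ and consider $a_t=u_t\,a_0\,u_t$, which lies in $A+\Cdb\cdot 1_A$ by (1.3) of \cite{BWj} and is a lift of $x$ since $e_t\in J$. A routine partial-cai estimate gives $\|a_t-\alpha\|\to r_\alpha$ for each fixed $\alpha$, so over the finite set $\{0,\alpha_1,\alpha_2,\alpha_3\}$ we obtain a single sequence with all four relevant norms converging. The main obstacle I anticipate is the descent from these approximate equalities to exact ones for a single element of $A$: a weak$^\ast$ cluster point in $A^{\ast\ast}$ would meet all the equalities, but the conclusion is a statement inside $A$. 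I expect to handle this as in \cite{BOZ}, by exploiting either the strict convex geometry of $K$ in case (i) (no room for strict containment in any $\overline{D}(\alpha_i,r_{\alpha_i})$ collapses the limit into $A$), or the positive slack of $\hat K$ around $K$ in case (ii) (which absorbs any residual $\epsilon$-error in the approximation). Matching $\|a\|=\|x\|$ is preserved through the cai surgery because $\|u_t z u_t\|\leq\|z\|$ for contractive $u_t$, and because $\|a_t\|\to\|x\|_{A/J}=\|x\|$ by the same partial-cai estimate applied at $\alpha=0$.
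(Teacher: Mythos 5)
Your overall plan (Bohnenblust--Karlin intersection formula plus a lift achieving finitely many exact ball equalities) has a genuine gap at its core, in fact two. First, the reduction to three disks is false: a compact convex $K$ need not be the intersection of three, or of any finite number, of disks from the family $\overline{D}(\alpha,r_\alpha)$ --- a finite intersection of closed disks has boundary made of finitely many circular arcs, so any $K$ with a straight edge (numerical ranges can be, e.g., squares) is never recovered by a finite subfamily. Worse, in case (ii) the scheme cannot work even in principle: each disk $\overline{D}(\alpha_i,r_{\alpha_i})$ contains the segment $K$, and by strict convexity of disks the relative interior of $K$ lies in the corresponding \emph{open} disks; hence any finite intersection contains a full neighbourhood of the midpoint of $K$, and so is never contained in the thin triangle $\hat{K}$, which has $K$ on its boundary. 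Thus controlling $\|a-\alpha_i\|$ at three centres can never force $W_A(a)\subset\hat{K}$.

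Second, the ``routine partial-cai estimate'' is not correct, and the approximate-to-exact descent you defer is precisely the hard content. With $a_t=u_ta_0u_t$ one has $a_t-\alpha 1=u_t(a_0-\alpha)u_t+\alpha(u_t^2-1)$, and the scalar term does not vanish: already for $A=\Cdb\oplus\Cdb$, $J=0\oplus\Cdb$, $a_0=(x_0,c)$, one gets $\|ua_0u-\alpha 1\|=\max(|x_0-\alpha|,|\alpha|)$, which can strictly exceed $r_\alpha=|x_0-\alpha|$. Repairing this by compressing $a_0-\alpha$ instead makes the lift depend on $\alpha$, and producing one element of $A$ meeting several exact ball (and numerical range) conditions simultaneously is exactly the $M$-ideal lifting problem; it is not obtained by a cai computation plus ``collapsing the limit''. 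The paper does not reprove this machinery: it invokes \cite[Theorem 8.8]{BOZ}, which applies because an approximately unital Jordan ideal is an $M$-ideal in $A$ by \cite[Theorem 3.25 (1)]{BWj}; the singleton case is disposed of by noting that a singleton numerical range forces $w\in\Cdb 1$; and in case (ii) the nonempty-interior hypothesis of Theorem 8.8 is arranged by the direct sum trick, replacing $A,J$ by $A\oplus^\infty\Cdb$, $J\oplus(0)$ and lifting $(x,\lambda)$, whose numerical range is the convex hull of $K$ and $\lambda$, before restricting back. If you wish to avoid citing Theorem 8.8 you would essentially have to redevelop the ball-intersection/proximinality theory of $M$-ideals, not refine the cai argument.
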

\begin{proof}       For (i) note that if the numerical range of $x$ is not a singleton nor a line segment then this follows from \cite[Theorem 8.8]{BOZ} since $J$ is an $M$-ideal \cite{BWj}.  
If the numerical range of an element  $w$ 
in a 
unital Jordan algebra  is a singleton then the same is true with respect to the generated $C^*$-algebra, so that $w \in \Cdb 1$.

 If $W_{A/J}(x)$ is a nontrivial line segment $K$,  this works just as in the proof of  \cite[Corollary 8.9]{BOZ}.  Replace $A$ by the unital Jordan algebra $B=A\oplus^{\infty}\Cdb,$ replace $J$ by the approximately unital Jordan ideal $I=J\oplus (0).$    Then $I$ is an $M$-ideal in $B$ by \cite[Theorem 3.25 (1)]{BWj}.
For a scalar $\lambda$ chosen as in the proof of  \cite[Corollary 8.9]{BOZ}, $W((x,\lambda))$ is the convex hull
of $K$ and $\lambda$, hence has nonempty interior.   Since  $I$ is an $M$-ideal in $B$, 
 we may   appeal to  \cite[Theorem 8.8]{BOZ}
in the same way as in the proof we are copying, to obtain our result.   
\end{proof}

\begin{corollary}  \label{qra}  {\rm (Cf.\ \cite[Corollary 8.10]{BOZ}.)} \ 
	If $J$ is an approximately unital Jordan ideal in any (not necessarily approximately unital) Jordan operator algebra $A,$  then $q({\mathfrak r}_A)={\mathfrak r}_{A/J}$ where $q: A \to A/J$ is the quotient map.		
\end{corollary}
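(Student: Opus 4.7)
My plan is to prove the two inclusions separately, with nearly all the work in the reverse direction. For the forward inclusion $q({\mathfrak r}_A) \subset {\mathfrak r}_{A/J}$, I would pass to the $C^*$-level: inside $C^*(A)$ let $I$ be the closed two-sided $*$-ideal generated by $J$. The $M$-ideal property from \cite[Theorem 3.25]{BWj} should give $A \cap I = J$, so the $*$-quotient $\pi: C^*(A) \to C^*(A)/I$ restricts to $q$ on $A$; if $x + x^* \geq 0$ in $C^*(A)$, then $q(x) + q(x)^* = \pi(x + x^*) \geq 0$ in the quotient, which houses $A/J$ as a Jordan subalgebra of $C^*(A/J)$.

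For the reverse inclusion ${\mathfrak r}_{A/J} \subset q({\mathfrak r}_A)$, my plan is to unitize and invoke Proposition \ref{tri}. Even though $A$ need not be approximately unital, $J$ remains an approximately unital Jordan ideal in the unital Jordan operator algebra $A^1$ (since $J \circ {\mathbb C} 1 \subset J$), and $A/J$ sits inside the unital quotient $A^1/J$. Given $y \in {\mathfrak r}_{A/J}$, real positivity is intrinsic to $y + y^*$ in the ambient $C^*$-algebra, so every state $\psi$ of $A^1/J$ extends to a state of $C^*(A^1/J)$ and satisfies $\re\psi(y) = \frac{1}{2}\psi(y + y^*) \geq 0$, placing $K := W_{A^1/J}(y)$ in the closed right half-plane.

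Next I would apply Proposition \ref{tri} to the pair $(A^1, J)$ to obtain a lift $\tilde a \in A^1$ with $\tilde q(\tilde a) = y$, $\|\tilde a\| = \|y\|$, and $W_{A^1}(\tilde a)$ either equal to $K$ or contained in a thin triangle $\hat K$ having $K$ as one side. In the latter case I would choose $\hat K$ inside the closed right half-plane by placing its third vertex slightly to the right of $K$ (strictly to the right when $K$ lies on the imaginary axis), making $\tilde a$ real positive in $A^1$. To see that in fact $\tilde a \in A$, I would pick any preimage $a_0 \in A$ of $y$ under $q$; then $\tilde a - a_0 \in \ker \tilde q = J \subset A$, forcing $\tilde a = a_0 + (\tilde a - a_0) \in A + J = A$. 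Since real positivity depends only on $\tilde a + \tilde a^*$ in the ambient $C^*$-algebra, $\tilde a \in {\mathfrak r}_A$ is the desired lift.

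The hardest step I anticipate is the geometric one of arranging $\hat K$ inside the closed right half-plane in the line-segment case of Proposition \ref{tri}; the delicate scenario is when $K$ meets or lies along the imaginary axis, but the freedom to choose the third vertex of the triangle makes this manageable. Everything else is bookkeeping built on the unital Jordan machinery of Proposition \ref{tri} and the standard $M$-ideal description of $J \subset A$.
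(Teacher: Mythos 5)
Your proof of the hard inclusion ${\mathfrak r}_{A/J}\subset q({\mathfrak r}_A)$ is essentially the paper's route: unitize, note $J$ is still an approximately unital Jordan ideal (and $M$-ideal) in $A^1$, lift via Proposition \ref{tri} with the thin triangle chosen in the closed right half-plane, and observe the lift lies in $A$ since it differs from a preimage by an element of $J\subset A$. The paper leaves these details to ``as in \cite[Corollary 8.10]{BOZ}'' and splits into cases ($A$ unital; $A$ nonunital with $A/J$ nonunital, using $(A/J)^1=A^1/J$; and the separate trick when $A/J$ is unital), whereas you handle all cases uniformly by working with $W_{A^1/J}(y)$ directly and invoking the representation-independence of real positivity; that is a mild streamlining and is correct (the half-plane is convex, so the endpoints of $K$ together with any nearby third vertex chosen in the half-plane and off the line through $K$ give $\hat K$ inside the half-plane, even when $K$ touches or lies on the imaginary axis). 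Where you genuinely diverge is the easy inclusion $q({\mathfrak r}_A)\subset{\mathfrak r}_{A/J}$: the paper gets it in one line from $q({\mathfrak F}_A)={\mathfrak F}_{A/J}$ (\cite[Proposition 3.28]{BWj}) together with ${\mathfrak r}_A=\overline{\Rdb_+{\mathfrak F}_A}$, while you pass to the $C^*$-ideal $I$ generated by $J$ in $C^*(A)$. That route can be made to work, but the assertions you need --- $A\cap I=J$ and that $\pi|_A$ induces an \emph{isometric} Jordan embedding of $A/J$ into $C^*(A)/I$ (without which accretivity of $\pi(x)$ says nothing about ${\mathfrak r}_{A/J}$) --- are not formal consequences of the $M$-ideal statement in \cite[Theorem 3.25]{BWj}; they are essentially the content/proof of \cite[Proposition 3.27]{BWj}, and approximate unitality of $J$ is crucial there. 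So your ``should give'' is the one under-justified step; either cite that machinery explicitly or replace this direction by the paper's ${\mathfrak F}$-argument, which is shorter and fully referenced.
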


\begin{proof}    We know from  \cite[Proposition 3.28]{BWj} that $q({\mathfrak F}_A)={\mathfrak F}_{A/J}$, and 
${\mathfrak r}_A = \overline{\Rdb_+ \, {\mathfrak F}_A}$ by the proof of \cite[Theorem 3.3]{BRII}.   
So $q( \Rdb_+ \, {\mathfrak F}_A)=  \Rdb_+ \, {\mathfrak F}_{A/J}$.   Taking closures we have
 $q({\mathfrak r}_A) \subset {\mathfrak r}_{A/J}$.
	
The other direction uses Proposition \ref{tri}.
	If $A$ is unital the  result immediately follows from Proposition \ref{tri} 
as in  \cite[Corollary 8.10]{BOZ}.  If $A$ is nonunital and $A/J$ is nonunital, then by \cite[Proposition 2.4 and Proposition 3.27]{BWj}, we can extend $q$ to a contractive unital Jordan homomorphism from $A^1$ to a unitization of $A/J.$ Then $J$ is still an M-ideal in $A^1$ by \cite[Theorem 3.25 (1)]{BWj}. Therefore, again the result follows
as in  \cite[Corollary 8.10]{BOZ}  by applying  the unital case to the canonical map from $A^1$ onto $(A/J)^1 = A^1/J.$  (The latter formula
following from  the Jordan operator algebra case of Meyer's unitization theorem  \cite[Section 2.2]{BWj}.)   
		If $A/J$ is unital, then one may reduce to the previous case where it is not unital, by the trick for that in the proof of
  \cite[Corollary 8.10]{BOZ}.  
\end{proof}

 We recall that an element $x$ in a unital Jordan 
algebra $A$ is invertible if there exists $y \in A$ with $x \circ y = 1$ and $x^2 \circ y = x$.
If $A$ is an associative algebra and $x \circ y = \frac{1}{2}(xy + yx)$ then 
it is known (or is an easy exercise to check) that this coincides with the usual definition.
For a  Jordan operator algebra the spectrum Sp$_A(x)$ is defined to be the scalars $\lambda$ such that 
$\lambda 1 - x$ is invertible in $A^1$, and  as in the Banach algebra case
can be shown to be a  
compact nonempty set, on whose complement
$(\lambda 1 - x)^{-1}$ is analytic as usual, and the spectral radius is the 
usual $\lim_n \, \| x^n \|^{\frac{1}{n}}$.   These facts are all well known in the theory of Jordan Banach algebras.  For a general (possibly nonunital) 
Jordan operator algebra  we say that $x$ is {\em quasi-invertible}  if $1-x$ is invertible in $A^1$.

\begin{theorem}
\label{cxAx} {\rm (Cf.\ \cite[Theorem 3.2]{BRI} and  \cite[Theorem 3.21]{BOZ}.)} \ For a Jordan operator algebra $A$, if $x\in {\mathfrak r}_A$, the following are equivalent:
\begin{itemize}
\item[(i)] $x \, \joa{x} \,  x$ is closed.
\item[(ii)]	$\joa{x}$ is unital.
\item[(iii)]	 There exists $y\in\joa{x}$ with $xyx=x.$
\item[(iv)] $xAx$ is closed.
\item[(v)]	 There exists $y\in A$ such that $xyx=x.$
Also, the latter conditions imply
\item[(vi)] $0$ is isolated in or absent from ${\rm Sp}_A (x).$
\end{itemize}
Finally, if further $\joa{x}$ is semisimple, then condition {\rm (i)-(vi)} are all equivalent.
\end{theorem}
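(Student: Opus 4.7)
My plan is to reduce the bulk of the equivalences to the associative case by exploiting that $\joa{x}$ coincides with $\oa{x}$ (as noted in the introduction, the closed Jordan algebra and closed associative operator algebra generated by a single element agree), and that $\oa{x}$ is commutative. Applying \cite[Theorem 3.2]{BRI} (or \cite[Theorem 3.21]{BOZ}) to the pair $(\oa{x}, x)$ in the associative setting yields the equivalence of (i), (ii), (iii) at once; the implication (iii) $\Rightarrow$ (v) is then immediate since $\joa{x} \subseteq A$.

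For (v) $\Rightarrow$ (iv), given $y \in A$ with $xyx = x$, I would define $\psi : A \to A$ by $\psi(w) = x(ywy)x$. The element $ywy$ lies in $A$ by the Jordan identity $aba = 2 a \circ (a \circ b) - a^2 \circ b$ (applied with $a = y$, $b = w$), and then $x(ywy)x \in xAx$ by the property $xAx \subseteq A$ recorded as (1.3) of \cite{BWj}. Associativity in $B(H)$ gives $\psi(xbx) = xy(xbx)yx = (xyx) b (xyx) = xbx$, so $\psi$ is a bounded linear projection with range $xAx$, which is therefore closed.

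The main obstacle is (iv) $\Rightarrow$ (v). Since $x$ is accretive, $(x+1/n)^{-1} \in \oa{x}^1 \subset A^1$ exists with $\|(x+1/n)^{-1}\| \leq n$, and a direct resolvent computation yields $\|x(x+1/n)^{-1}x - x\| \leq 2/n$, so $f_n := x(x+1/n)^{-1}x \to x$ in norm. Writing $(x+1/n)^{-1} = n \cdot 1 + a_n$ with $a_n = -n x(x+1/n)^{-1} \in \oa{x} \subset A$ gives $f_n = n x^2 + x a_n x \in \Cdb x^2 + xAx = xA^1 x$. Since $xAx$ is closed and $\Cdb x^2$ is at most one-dimensional, $xA^1 x$ is closed, so $x \in xA^1 x$. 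If $x^2 \neq 0$ and $x^2 \notin xAx$, then $xA^1 x = xAx \oplus \Cdb x^2$ topologically, and continuity of the projection onto $\Cdb x^2$ applied to $f_n \to x$ would force $n x^2$ to converge---absurd. Hence $x^2 \in xAx$ in all cases, so $xA^1 x = xAx$ and $x \in xAx$, giving (v). To close the cycle, (v) $\Rightarrow$ (ii) follows by applying \cite[Theorem 3.2]{BRI} to the associative operator subalgebra of $B(H)$ generated by $A$ and the witness $y$, in which $xyx = x$.

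For (vi): assuming (ii), $x$ is invertible in the unital commutative algebra $\oa{x}$, so $0 \notin \Spe_{\oa{x}}(x)$; combined with $\Spe_{A^1}(x) \subseteq \Spe_{\oa{x}^1}(x) = \Spe_{\oa{x}}(x) \cup \{0\}$ and the fact that the compact $\Spe_{\oa{x}}(x)$ is bounded away from $0$, the point $0$ is isolated in or absent from $\Spe_A(x)$. For the final claim under semisimplicity of $\joa{x}$: if (vi) holds, the Riesz idempotent $e_0 \in \oa{x}^1$ for $\{0\}$ produces $x e_0 \in \oa{x}$ which is quasinilpotent, hence $x e_0 = 0$ by semisimplicity (the Jacobson radical of a commutative semisimple Banach algebra contains all quasinilpotents and is trivial). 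Thus $e x = x$ for $e = 1 - e_0 \in \oa{x}^1$. Writing $e = \beta + b$ with $\beta \in \{0, 1\}$ and $b \in \oa{x}$, in the case $\beta = 0$ one has $e = b \in \oa{x}$ acting as the identity of $\oa{x}$ (from $b x = x$ and commutativity); in the case $\beta = 1$, $bx = 0$ together with commutativity forces $b \cdot \oa{x} = 0$, so $b^2 = 0$, which combined with $b^2 = -b$ (from $e^2 = e$) gives $b = 0$ and $e = 1$, and this in a nonunital $\oa{x}$ contradicts $0 \in \Spe_{\oa{x}^1}(x)$, so $\oa{x}$ must already have been unital. Either way $\oa{x}$ is unital, proving (vi) $\Rightarrow$ (ii) under semisimplicity.
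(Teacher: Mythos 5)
Your proposal is correct, and while it shares the paper's overall skeleton (reduce (i)--(iii) to the associative \cite[Theorem 3.2]{BRI} via $\joa{x}=\oa{x}$, get (v)$\Leftrightarrow$(iv) by hand, and close the cycle by applying the associative theorem in a containing associative algebra -- the paper uses $C^*(A)$ where you use the operator algebra generated by $A$), it diverges in the two genuinely nontrivial steps. For (iv)$\Rightarrow$(v) the paper invokes the fractional powers $x^{1/n}$ of the accretive element and the identity $x=\lim_n x^{1/n}xx^{1/n}$ to get $\oa{x}=\overline{x\,\oa{x}\,x}\subset xAx$, whereas you use only the elementary resolvent bound $\|(x+\tfrac1n)^{-1}\|\leq n$ for accretive $x$, obtaining $x\in xA^1x$ and then eliminating the scalar part by the closed-plus-finite-dimensional direct sum argument (the case $x^2=0$ being harmless); this buys independence from the root functional calculus at the cost of a slightly longer argument. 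Your (v)$\Rightarrow$(iv) map $\psi(w)=x(ywy)x$ is essentially the paper's observation that $xAx=xyAyx$ with $xy$ idempotent, just packaged as an explicit bounded idempotent on $A$. For (vi) you prove (ii)$\Rightarrow$(vi) directly from $\mathrm{Sp}_A(x)\subset \mathrm{Sp}_{\oa{x}}(x)\cup\{0\}$ with $\mathrm{Sp}_{\oa{x}}(x)$ compact and avoiding $0$, which is cleaner than the paper's boundary-spectrum contradiction argument for (v)$\Rightarrow$(vi) (and lets you skip the paper's preliminary reduction to $x$ not invertible). Your semisimple converse is a worked-out version of the ``original proof'' the paper defers to, and the case analysis $e=\beta 1+b$, $\beta\in\{0,1\}$, is fine.

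One point deserves a sentence that you currently pass over in silence: condition (vi) concerns $\mathrm{Sp}_A(x)$, but your Riesz idempotent $e_0$ lives in $\oa{x}^1$, so you need $0$ to be isolated in (or absent from) $\mathrm{Sp}_{\oa{x}^1}(x)$, which is a possibly larger set. The paper at least asserts the equivalence of the two isolation statements; it is true here because $\mathrm{Sp}_{\oa{x}^1}(x)$ differs from $\mathrm{Sp}_{B(H)}(x)\subset \mathrm{Sp}_A(x)$ only by filled-in bounded components of the complement, and since $x\in{\mathfrak r}_A$ a punctured disc about $0$ meets the open left half-plane and hence lies in the unbounded component, so no such hole can accumulate at $0$. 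Add that observation and your argument is complete.
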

\begin{proof}
If $A$ is unital, $x\in {\mathfrak r}_A,$ and $x$ is invertible in $A$, then $s(x) = 1 \in \joa{x}$.
Thus (ii) is true, and in this case (i)-(vi) are all obvious. So we may assume that $x$ is not invertible in $A.$   The equivalence of (i)--(iii) is the same as the operator algebra case (since $\joa{x} = {\rm oa}(x)$); and these imply (v).

	(iv) $\Rightarrow$ (v) \ Suppose that $xAx$ is closed. Now $x=(x^{\frac{1}{3}})^3,$ and since $x = \lim_n \, x^{\frac{1}{n}} \, 
x  x^{\frac{1}{n}}$ and $x^{\frac{1}{n}} \in {\rm oa}(x)$ we see that    
	$$x^{\frac{1}{3}}\in  {\rm oa}(x) =\overline{x  {\rm oa}(x) x}\subset \overline{xAx}=xAx,$$
	and so $xyx=x$ for some $y\in A.$ 
	
	(v) $\Rightarrow$ (iv)  \ Similarly to the original proof of  \cite[Theorem 3.2]{BRI}, $$xAx=(xyx)A(xyx)\subseteq xyAyx\subseteq xAx,$$ 
so that $xAx=xyAyx$. Since $xy$  is an idempotent (in any containg $C^*$-algebra), $xAx$ is closed.

Clearly (v) implies the same assertion but with $A$ replaced by $C^*(A)$.  This, by the  theorem we are generalizing, 
implies (i)--(iii).

That 
(v) implies  (vi) is as in   \cite[Theorem 3.2]{BRI}: If $0$ is not isolated in ${\rm Sp}_{A}(x)$, then there is a sequence of boundary points in ${\rm Sp}_A(x)$ converging to $0$. Since $\partial {\rm Sp}_A(x)\subset {\rm Sp}_{B(H)}(x)$ as in the Banach algebra case, we obtain a contradiction.
Similarly 	(vi) implies (ii) if oa$(x)$ is semisimple:
if the latter is not unital, then $0$ is an isolated point in ${\rm Sp}_{A}(x)$ iff  $0$ is isolated in ${\rm Sp}_{{\rm oa}(x)}(x),$ and so we can use the original proof. 
\end{proof}

	\section{Hereditary subalgebras} 
	
 If  $A$   is a Jordan subalgebra of a $C^*$-algebra $B$ then $A^{**}$ may be identified with the
Jordan subalgebra $A^{\perp \perp}$ of the $W^*$-algebra $B^{**}$, and hence  $A^{**}$ is a  Jordan operator algebra
with its `Arens product' (see e.g.\ \cite[Section 1.3]{BWj}).  
We recall that a {\em hereditary subalgebra} (or {\em HSA}) of a Jordan operator algebra $A$ is a norm-closed Jordan subalgebra $D$ of $A$ containing a
Jordan contractive approximate identity such that $xAx \subset D$ for all $x \in D$. In \cite{BWj} we showed  that a subspace
$D$ of $A$ is a hereditary subalgebra if and only if $D$ is of form
$\{ a \in A : a = pap \}$ for a projection
$p \in A^{**}$ which is what we called $A$-{\em open} in \cite{BWj} (that is, there exists a net in $A$ with
$x_t = p x_t p \to p$ weak* in $A^{**}$).  In \cite{BNj} it is shown that 
if in addition  $A$   is a Jordan subalgebra of a $C^*$-algebra $B$ then $p$ is $A$-open if and only if $p$ is  open in Akemann's sense (see e.g.\ \cite{Ake2}) as a projection in $B^{**}$.  If $A$ is an approximately unital Jordan operator algebra then a projection $q$ in $A^{**}$ is {\em closed} if
$q^\perp = 1-q$ is open in $A^{**}$, where $1$ is the identity of $A^{**}$.

By a trivial HSA of $A$ below we mean of course $(0)$ or $A.$    We refer the reader to \cite{BWj,BNj} for the theory of 
hereditary subalgebras of Jordan operator algebras.   In the present section we generalize to the Jordan case the main remaining aspects 
from the associative theory of HSA's, and their relation to open and closed projections, from papers such as \cite{BHN,BRI, BRord}.

\begin{theorem} {\rm (Cf.\ \cite[Theorem 4.1]{BRI}.)} \ 
For a unital Jordan operator algebra $A$, the following are equivalent:
\begin{itemize}

\item[(i)] $A$ has no nontrivial HSA's (equivalently, $A^{**}$ has no nontrivial open projections).  
\item[(ii)] $a^n\rightarrow  0$ for all $a\in {\rm Ball}(A)\setminus \Cdb 1.$
\item[(iii)] The spectral radius $r(a)< \Vert a\Vert$ for all $a\in {\rm Ball}(A)\setminus \Cdb 1.$
\item[(iv)] The numerical radius $v(a)< \Vert a\Vert$ for all $a\in {\rm Ball}(A)\setminus \Cdb 1.$
\item[(v)] $\Vert 1+a\Vert <2$ for all $a\in {\rm Ball}(A)\setminus \Cdb 1.$
\item[(vi)]	${\rm Ball}(A)\setminus \Cdb 1$ consists entirely of   elements $x$ which are quasi-invertible in $A$. 
\end{itemize}
	 If $A$ has a partial cai then the following are equivalent:
\begin{itemize}

\item[(a)] $A$ has no nontrivial HSA's.

\item[(b)] $A^1$ has one nontrivial HSA.

\item[(c)] ${\rm Re}(x)$ is strictly positive for every $x \in {\mathfrak F}_A\setminus\{0\} .$   
\end{itemize} 
\end{theorem}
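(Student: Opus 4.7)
The plan is to adapt the proof of the associative analog \cite[Theorem 4.1]{BRI}, exploiting the fact (recalled in the introduction) that for any single element $a \in A$, the closed Jordan algebra $\joa{a}$ coincides with the associative operator algebra $\oa{a}$ generated by $a$ in $C^*(A)$. Since this algebra is commutative and associative, much of the one-variable theory transfers verbatim.

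The equivalences among (ii), (iii), (v), (vi) each depend only on $\joa{a} = \oa{a}$ and its spectrum, so they follow directly from the associative case. For (iv), I would use the fact recalled at the end of the introduction that every state of a unital Jordan operator algebra $A$ is the restriction of a state of $C^*(A)$; hence the numerical range $W_A(a) = W_{C^*(A)}(a)$, and the numerical radius equivalence reduces to a standard $C^*$-algebra statement.

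For the equivalence of (i) with the rest, I would argue via support projections and the $A$-open/HSA correspondence from \cite{BWj,BNj}. If one of (ii)--(v) fails, one produces (as in the associative case) a nonzero $b \in {\mathfrak F}_A$ with $b \notin \Cdb 1$; for instance, if (v) fails at $a$, take $b = (1+a)/2$. The weak* limit $p$ of $b^{1/n}$ in the commutative bidual $\oa{b}^{**} \subset A^{**}$ is by the one-variable associative theory a nontrivial open projection of $\oa{b}^{**}$, and by the identification in \cite{BNj} of $A$-open projections with those projections in $A^{**}$ that are Akemann-open in $C^*(A)^{**}$, $p$ is in fact $A$-open. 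It thus defines a nontrivial HSA of $A$, so (i) fails. Conversely, a nontrivial HSA $D$ of $A$ corresponds to a nontrivial $A$-open projection; approximating this projection by a net in ${\mathfrak F}_D \subset {\mathfrak F}_A$ and running the one-variable argument of the associative case inside $C^*(A)$ produces elements witnessing the failure of (ii)--(v).

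For the three-way equivalence under partial cai: (a) iff (b) follows from the correspondence between HSAs of $A$ and HSAs of $A^1$ together with the fact that $A$ is always a nontrivial HSA of $A^1$ (coming from the open projection $e_A \in A^{**}$ supplied by the partial cai). For (a) iff (c), if some nonzero $x \in {\mathfrak F}_A$ has $\re(x)$ not strictly positive in $C^*(A)$, then $0 \in \Spe(\re(x))$ and the support-projection argument in $\oa{x}$ again produces a nontrivial $A$-open projection, contradicting (a); conversely, any nontrivial $A$-open projection is the weak* limit of a net in ${\mathfrak F}_A$, from which one extracts a nonzero element whose real part fails to be strictly positive. The main obstacle throughout is the support-projection step in the direction $(v) \Rightarrow (i)$: one must verify that the weak* limit of $b^{1/n}$ is genuinely $A$-open in $A^{**}$ rather than merely open in $C^*(A)^{**}$. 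This is precisely what the Jordan HSA machinery of \cite{BWj,BNj} delivers, so the argument ultimately reduces to a careful transcription of the associative proof.
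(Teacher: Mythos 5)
Your overall strategy (reduce the element-wise conditions to the one-variable associative algebra $\oa{a}=\joa{a}$, and mediate between (i) and the rest via support projections and the $A$-open/HSA correspondence) is in the same spirit as the paper's proof, but the pivotal step is wrong as you have written it. If (v) fails at $a$, i.e.\ $\Vert 1+a\Vert=2$ with $a\in{\rm Ball}(A)\setminus\Cdb 1$, the element you propose, $b=(1+a)/2$, need not give a nontrivial open projection: the weak* limit of $b^{1/n}$ is the support projection $s(b)$, and nothing forces $s(b)\neq 1_{A^{**}}$. For example in $A=C([0,1])$ with $a(t)=1-t$ one has $\Vert 1+a\Vert=2$ while $b=(1+a)/2$ is nonvanishing, so $s(b)=1$ and your projection is trivial. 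The correct witness is $1-a$: by Hahn--Banach there is a state $\varphi$ with $\varphi(1)=\varphi(a)=1$, so $\varphi(1-a)=0$, and it is \cite[Lemma 3.10]{BWj} (not the mere existence of the weak* limit of roots) that converts this into $s(1-a)\neq 1$, hence into the nontrivial HSA $\overline{(1-a)A(1-a)}$. Nontriviality of the support projection is exactly the crux, and your proposal supplies no argument for it. Relatedly, you cannot dispose of (vi) by saying it "follows directly from the associative case" applied to $\oa{a}$: quasi-invertibility in $A$ and the absence of HSA's of $A$ are global conditions not visible inside $\oa{1,a}$. The paper handles (vi) separately: (iii)$\Rightarrow$(vi) because the quasi-inverse is produced inside $\oa{1,a}\subset A$; (i)$\Rightarrow$(vi) because $s(1-a)=1$ forces $\overline{(1-a)A(1-a)}=A$, so $(1-a)^{1/n}\to 1$ and $1-a$ is invertible by a Neumann-series argument; and (vi)$\Rightarrow$(i) via $(1-a)(1-a')A(1-a')(1-a)=A\subset(1-a)A(1-a)$, giving $s(1-a)=1$ for every such $a$.

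Two further gaps in the second half. For (a)$\Leftrightarrow$(c), your inference from "${\rm Re}(x)$ not strictly positive" to "$0\in{\rm Sp}({\rm Re}(x))$, hence a nontrivial open projection" does not work in the nonunital setting: $0$ can lie in the spectrum while $s(x)=1$ (e.g.\ a positive injective compact operator), so spectral membership of $0$ proves nothing; the correct tool is \cite[Lemma 3.11]{BWj}, which says ${\rm Re}(x)$ is strictly positive iff $s(x)=1$, combined with the fact that open projections are suprema of support projections. For (a)$\Rightarrow$(b), there is no off-the-shelf "correspondence between HSA's of $A$ and HSA's of $A^1$" to cite: the paper must show that for an open projection $p$ in $(A^1)^{**}$ the compression $ep$ (with $e=1_{A^{**}}$) is again open (\cite[Proposition 6.4]{BNj}), note that $1-e$ is a minimal projection, and then rule out $p=1-e$ being open by a separate argument using the character on $A^1$ annihilating $A$. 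Your sketch omits all of this, and it is where the real work in that implication lies.
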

  
\begin{proof}    Clearly (ii)--(v) are equivalent by the theorem we are generalizing applied to oa$(a)$.   

(i)  $\Rightarrow$ (vi) \ The  HSA's are in
bijective correspondence with the  open projections in $A^{**}$, and these are the same thing as suprema of support projections 
of elements in ${\mathfrak F}_A$ (or equivalently of real positive  elements in $A$ by \cite[Lemma 3.12 (1)]{BWj}).  
Thus, $A$ has no nontrivial HSA's iff  $s(x)$ is an identity for $A^{**}$ for all
$x \in {\mathfrak F}_A\setminus\{0\} .$      Let $B = {\rm oa}(1,a)$ for  $a\in {\rm Ball}(A) \setminus \{ 1
\}.$    Then $B \subset \overline{(1-a) A (1-a)} = A$, so that 
$(1-a)^{\frac{1}{n}}$  is an approximate identity for $B$, which must therefore converge to $1$.    It follows that $B =  {\rm oa}(1-a)$,
and so by the Neumann lemma (approximating $1$) we have $1-a$ is invertible in $B$, hence in $A$, so that (vi) holds. 

(vi) \ $\Rightarrow$ (i) \ Conversely $a \in {\rm Ball}(A) \setminus \{ 1
\}$  quasi-invertible, with quasi-inverse $a'$,   implies that $$(1-a) (1-a') A (1-a')  (1-a) = A \subset 
(1-a)  A  (1-a) \subset A,$$ so that $s(1-a) = 1$.  Hence  (vi) implies that every nonzero element in ${\mathfrak F}_A$ has support
projection $1$.  

(iii) $\Rightarrow$ (vi) \ If $r(a)< \Vert a\Vert$ for all $a\in {\rm Ball}(A)\setminus \Cdb 1,$ then $a$ is quasi-invertible, and its quasi-inverse
is in oa$(1,a)$.

 (i) $\Rightarrow$ (v) If there exists $a\in {\rm Ball}(A)\setminus \Cdb 1$ such that $\Vert 1+a\Vert =2$, then $\Vert a \Vert =1$. 	By the Hahn-Banach Theorem, $\exists$ $\varphi\in {\rm Ball}(A^*)$ such that $\varphi(1+a)=2$, which means that $\varphi(1)=\varphi(a)=1.$ Hence
$\varphi$ is a state and  $\varphi(1-a)=0.$ This implies $s(1-a) \neq 1$ by Lemma 3.10 in \cite{BWj}, which contradicts (i) by the relationship between HSA's and $s(x)$ mentioned at the start of the proof.  

That (b) implies (a) is obvious.    For the converse note that if $e= 1_{A^{**}}$ is the (central) support projection of $A$ then for an open projection $p$ in $(A^1)^{**}$,  $ep$  is open by 
\cite[Proposition 6.4]{BNj}.     (We may write the `non-Jordan expression' $ep$  since $e$ is central; thus if one likes such expessions
below may be evaluated in any containing generated $C^*$-algebra.)
Note that $1-e$ is a minimal projection in $(A^1)^{**}$ since
$(1-e) (a + \lambda 1) = \lambda (1-e)$ for all $a \in A, \lambda \in \Cdb$.
So if (a) holds then $ep = e$ or $ep = 0$, whence $p = e$ or $p = 1$, or $p = 0$ or $p = 1-e$.   The last of these is impossible,
since if $x_t = (1-e) x_t \to 1-e$  with $x_t \in A^1$, then $e x_t = 0$, 
which implies that $x_t$  is in the kernel of the character on $A^1$ that annihilates $A$.
So $x_t \in \Cdb 1$, giving the contradiction $1-e \in \Cdb 1$.   
That (a) is equivalent to (c) directly follows from \cite[Lemma 3.11]{BWj} (which gives  ${\rm Re}(x)$ is strictly positive  iff
$s(x) = 1$) and the relationship between HSA's and $s(x)$ mentioned at the start of the proof.   This equivalence holds for unital $A$ too.  
\end{proof}

{\bf Remark.}  The last result has similar corollaries as in the associative algebra case.   For example one may deduce 
that an approximately unital Jordan operator algebra with no countable Jordan cai, has nontrivial HSA's. 
To see this note that if $A$ has no countable partial cai then by \cite[Theorem 3.22]{BWj} there is no element $x\in {\mathfrak r}_A$ with $s(x)=1_{A^{**}}.$ Thus  by the previous proof there are  
nontrivial HSA's in $A$.

The following are  Jordan variants of Theorem 7.1 and  Corollary 7.2 in \cite{BRI}.  
	
\begin{theorem}
If $A$ is an approximately unital Jordan operator algebra which is a closed two-sided Jordan ideal in an operator algebra $B,$ then $\overline{xAx}$ is a HSA in A for all  $x\in {\mathfrak r}_B.$ 	
\end{theorem}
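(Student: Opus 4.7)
The plan is to identify $\overline{xAx}$ with the HSA $D_p := \{a \in A : pap = a\}$ for a certain $A$-open projection $p \in A^{**}$, so that the theorem follows from the $A$-open-projection/HSA correspondence of \cite{BWj,BNj}. I would first reduce to the case where $B$ is unital by passing to the unitization $B^1$: the Jordan ideal property persists in $B^1 = B + \Cdb 1$ because $(\lambda 1 + b)\circ a = \lambda a + b\circ a \in A$.

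The first technical input is that $e := 1_{A^{**}}$ is central in $B^{**}$. Since $A^{**}$ is a weak*-closed Jordan ideal of $B^{**}$ with identity $e$, for every $b \in B^{**}$ one has $e\circ b \in A^{**}$, so $e(e\circ b) = e\circ b = (e\circ b)e$, which expands to $ebe = eb = be$. Hence $eb=be$ and $A^{**} = eB^{**}$. Set $s := s(x) \in B^{**}$ and $p := es = se \in A^{**}$. Routine calculations, using centrality of $e$, the triple identity $\{a,c,b\} := (a\circ b)\circ c - (c\circ a)\circ b + (b\circ c)\circ a = \tfrac12(acb+bca)\in A$ (valid for $a,b\in A$, $c\in B$), and $xax = U_x(a) = 2x\circ(x\circ a) - x^2\circ a \in A$, show that $xAx \subset A$, that $\overline{xAx}$ is a norm-closed Jordan subalgebra of $A$, and that $yAy \subset \overline{xAx}$ for $y \in \overline{xAx}$ (via $(xax)c(xax) = x\cdot U_a(xcx)\cdot x$). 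To see $p$ is $A$-open, consider $a_{n,\alpha} := x^{1/n}e_\alpha x^{1/n} = U_{x^{1/n}}(e_\alpha) \in A$ for a partial cai $(e_\alpha)$ of $A$; one checks $p\,a_{n,\alpha}\,p = a_{n,\alpha}$ using centrality of $e$ and $sx=x$, and the iterated weak* limits ($\alpha \to \infty$ yields $x^{2/n}e$, then $n \to \infty$ yields $p$) produce a diagonal subnet in $A$ converging weak* to $p$.

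The heart of the proof is the equality $\overline{xAx} = D_p$. The inclusion $\subset$ is a direct calculation $p(xbx)p = s\cdot xbx\cdot s = xbx$. For $\supset$, given $a \in D_p$ we have $pap = sas = a$ in $B^{**}$ (using $ea=a$), and the associative HSA theory applied to the unital algebra $B$ identifies $\overline{xBx} = \{b\in B : sbs = b\}$; thus $a \in \overline{xBx}$, and since $x^{1/n}$ serves as a partial cai for $\overline{xBx}$, we get $x^{1/n}ax^{1/n} \to a$ in norm. The decisive point is that each approximand $x^{1/n}ax^{1/n}$ lies in $\overline{xAx}$: writing $x^{1/n}$ as a norm-limit of polynomials in $x$ without constant term gives $x^{1/n} = \lim_k xq_k(x)$ for polynomials $q_k$, so $x^{1/n}ax^{1/n} = \lim_k x(q_k(x)\,a\,q_k(x))x$, with $q_k(x)\,a\,q_k(x) = U_{q_k(x)}(a) \in A$ by the Jordan ideal property. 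Letting $n \to \infty$ yields $a \in \overline{xAx}$. The main obstacle is exactly this final bridge---transferring the norm approximation from the associative HSA $\overline{xBx}$ back into $\overline{xAx}$ via the polynomial approximation of $x^{1/n}$, which is the place where the Jordan ideal condition of $A$ inside $B$ is essentially used.
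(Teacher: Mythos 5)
Your proposal is correct, but it takes a genuinely different route from the paper's proof. The paper verifies the definition of an HSA directly: it quotes the associative result that ${\rm oa}(x)$ has a countable cai $(e_n)$ for accretive $x$ (\cite[Theorem 3.1]{BRII}), notes that $yAy\subset J:=\overline{xAx}$ for $y\in J$, checks that $(e_nf_te_n)$ is a Jordan cai for $J$ (where $(f_t)$ is a cai for $A$), and then appeals to \cite[Proposition 3.3]{BWj}. You instead route everything through the open projection/HSA correspondence: you prove that $e=1_{A^{**}}$ is central in $B^{**}$, set $p=e\,s(x)$, show $p$ is $A$-open, and identify $\overline{xAx}$ with $\{a\in A: pap=a\}$, importing the associative identification $\overline{xBx}=\{b\in B: s(x)\,b\,s(x)=b\}$ and then transferring the approximants $x^{1/n}ax^{1/n}$ into $\overline{xAx}$ by writing $x^{1/n}=\lim_k xq_k(x)$ and using $q_k(x)\,a\,q_k(x)=U_{q_k(x)}(a)\in A$. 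It is worth noticing that both proofs hinge on the same Jordan-ideal bridge, namely that conjugation by elements of ${\rm oa}(x)\subset B$ keeps you inside $A$ via the $U$-operator identity: in the paper this is the (implicit) reason that $e_nf_te_n\in \overline{xAx}$, while in your argument it is the explicit polynomial-approximation step. Your route is heavier: it needs centrality of the support projection of a Jordan ideal, $A$-openness of $p$, the associative theorem identifying $\overline{xBx}$ with the HSA of $B$ supported by $s(x)$, and the standard facts $\sup_n\Vert x^{1/n}\Vert<\infty$ and $\Vert x^{1+1/n}-x\Vert\to 0$ (so $(x^{1/n})$ is only a bounded, not necessarily contractive, approximate identity for $\overline{xBx}$ -- which suffices for your limit). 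In exchange it yields information the paper's proof does not state, namely that the support projection of the HSA $\overline{xAx}$ is exactly $1_{A^{**}}\,s(x)$; the paper's proof is shorter, stays closer to the definition, and avoids the bidual machinery.
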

\begin{proof}
For any $x\in {\mathfrak r}_B,$ then by the operator algebra case (see e.g.\ \cite[Theorem 3.1]{BRII}) we know that  $\oa{x}$ has a countable 
cai  $(e_n)$ say.   Let $J=\overline{xAx}$, then $yAy \subset J$ for all $y \in J$.  Suppose that $(f_t)$ is a cai for $A,$ then $(e_nf_te_n)\in J$ is a Jordan cai for $J$ by routine techniques. So $\overline{xAx}$ is a HSA in $A$ by \cite[Proposition 3.3]{BWj}.
\end{proof}

\begin{corollary}
	If $A$ is an approximately unital Jordan operator algebra, and if 
 $\eta$ is a real positive element in the Jordan multiplier algebra of $A$ (see Definition {\rm 2.25} in {\rm  \cite{BWj}}),  then $\overline{\eta A\eta}$ is a HSA in $A.$
\end{corollary}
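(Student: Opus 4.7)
The plan is to reduce the corollary directly to the preceding theorem, taking the role of the ambient algebra $B$ to be the Jordan multiplier algebra $M = \Mult_J(A)$ of \cite[Definition 2.25]{BWj}. Since, by the defining properties of that multiplier algebra, $A$ is a closed two-sided Jordan ideal in $M$ and $\eta \in \mathfrak{r}_M$, the conclusion that $\overline{\eta A \eta}$ is a HSA in $A$ would follow at once.

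The one point to verify is that the preceding theorem, stated for $B$ an associative operator algebra, remains valid for $B$ a Jordan operator algebra. I would do this by rereading its proof: the only step that appears to need associativity is the invocation `by the operator algebra case' to obtain a countable cai $(e_n)$ for $\oa{\eta}$. But as observed in the introduction, for a single element $\eta$ the closed Jordan subalgebra $\joa{\eta}$ coincides with $\oa{\eta}$ in any containing $C^*$-algebra, and since $\eta$ is real positive in the Jordan operator algebra $M$ such a countable cai is supplied by \cite[Theorem 3.22]{BWj}. The remaining steps, namely the inclusion $yAy \subseteq \overline{\eta A \eta}$ for $y$ in that closure (which rests only on $A$ being a Jordan ideal and on the Jordan triple identity $yay = 2(y\circ a)\circ y - a \circ y^2$), and the construction of the Jordan cai $(e_n f_t e_n)$ followed by an appeal to \cite[Proposition 3.3]{BWj}, all take place inside a containing $C^*$-algebra, so the associativity of $B$ itself plays no role.

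The main obstacle, insofar as there is one, is the bookkeeping required to confirm that Definition 2.25 of \cite{BWj} really does set $\Mult_J(A)$ up as a Jordan operator algebra containing $A$ as a closed two-sided Jordan ideal with the expected notion of real positivity, so that the reduction is legitimate. Given that this is the Jordan analog of the standard multiplier construction in the associative setting of \cite{BRI}, one expects this to be essentially built into the definition, after which the corollary follows from a single application of the preceding theorem.
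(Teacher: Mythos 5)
Your proposal is correct and takes essentially the same route as the paper: the corollary is stated there without a separate proof, being an immediate application of the preceding theorem with $B$ taken to be the Jordan multiplier algebra of \cite[Definition 2.25]{BWj}, in which $A$ is a closed Jordan ideal. Your added observation---that the theorem's proof never really uses associativity of $B$, since $\joa{\eta}=\oa{\eta}$ supplies the countable cai and all remaining computations occur inside a containing $C^*$-algebra---is exactly the point implicitly relied on by the paper.
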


The following is the Jordan version of \cite[Corollary 2.25]{BRI}:

\begin{corollary}
Let $A$ be a unital  Jordan subalgebra of $C^*$-algebra $B$ and let $q\in A^{**}$ be a closed projection associated with an HSA
$D$  in $A$. Then an explicit Jordan cai for $D$ is given by $x_{(u,\epsilon)}=1-a,$ where $a$ is an element which satisfies the conclusions of the noncommutative Urysohn Theorem {\rm  9.5} in {\rm \cite{BNj}}, for  an open projection $u\geq q,$ and a scalar $\epsilon>0.$ This Jordan cai is indexed by such pairs $(u,\epsilon)$, that is, by the product of the directed set of open projections $u\geq q,$ and the set of $\epsilon>0.$ This Jordan cai is also in $\frac{1}{2}{\mathfrak F}_A$.
\end{corollary}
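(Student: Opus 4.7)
The plan is to adapt the proof of the associative Corollary 2.25 in \cite{BRI} to the Jordan setting, substituting the Jordan noncommutative Urysohn Theorem 9.5 of \cite{BNj} for the associative Urysohn lemma used there.

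First, I apply Theorem 9.5 of \cite{BNj}: for each pair $(u,\epsilon)$ with $u\geq q$ an open projection in $A^{**}$ and $\epsilon>0$, the theorem produces an element $a=a_{(u,\epsilon)}\in \tfrac{1}{2}{\mathfrak F}_A$ which acts as the identity on $q$ (so $qa=aq=q$ in $B^{**}$) and whose support is essentially under $u$, with the parameter $\epsilon$ controlling the tightness of this support condition. Setting $x_{(u,\epsilon)}:=1-a$, I check two immediate containments. From $qa=aq=q$ one obtains $q(1-a)=(1-a)q=0$, hence $p(1-a)p=1-a$ with $p=1-q$, which by the standard description of the HSA associated to $p$ forces $x_{(u,\epsilon)}\in D$. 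From $a\in \tfrac{1}{2}{\mathfrak F}_A$ one obtains $\|1-2x_{(u,\epsilon)}\|=\|2a-1\|\leq 1$, so $x_{(u,\epsilon)}\in \tfrac{1}{2}{\mathfrak F}_A$ as claimed.

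Next, I make the pairs $(u,\epsilon)$ into a directed set via $(u_1,\epsilon_1)\preceq (u_2,\epsilon_2)$ iff $u_1\geq u_2$ and $\epsilon_1\geq \epsilon_2$. The point here is that the open projections in $A^{**}$ dominating the fixed closed projection $q$ are downward directed, a standard fact in the Akemann calculus of open and closed projections, in the Jordan form given in \cite{BNj}.

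The main step is to verify that $(x_{(u,\epsilon)})$ is a Jordan cai for $D$, that is, $x_{(u,\epsilon)}\circ d \to d$ in norm for each $d\in D$. Since $x_{(u,\epsilon)}\circ d - d = -\tfrac{1}{2}(ad+da)$, this reduces to $\|ad\|,\|da\|\to 0$ along the directed set. Because $d\in D$ has $qd=dq=0$ and $a$ has (essentially) its support under $u$, the products $ad$ and $da$ are controlled in norm by quantities like $\|(u-q)d\|$ and $\|d(u-q)\|$; the quantitative content of Theorem 9.5 of \cite{BNj}, together with the fact that for $d\in D$ the element $d$ must be uniformly small on the relevant neighborhood of $q$ (the noncommutative analogue of the commutative statement that a continuous function vanishing on a closed set $F$ is uniformly small on small neighborhoods of $F$), then yields the desired norm convergence as $(u,\epsilon)$ refines.

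This last step is the main obstacle: the first three steps are essentially bookkeeping from the associative case, whereas here one must carefully extract the norm estimate from the precise quantitative form of the Jordan Urysohn theorem.
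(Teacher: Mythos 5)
Your outline follows the same route as the paper: set $x_{(u,\epsilon)}=1-a$, check $q\,x_{(u,\epsilon)}=x_{(u,\epsilon)}\,q=0$ so that $x_{(u,\epsilon)}\in D$, note $\Vert 1-2x_{(u,\epsilon)}\Vert=\Vert 2a-1\Vert\leq 1$ so that $x_{(u,\epsilon)}\in \frac{1}{2}{\mathfrak F}_A$, and then prove convergence of the net. The first two steps coincide with the paper's proof. The gap is in the convergence step, which you yourself flag as ``the main obstacle'': after reducing $x_{(u,\epsilon)}\circ d\to d$ to $\Vert ad\Vert,\Vert da\Vert\to 0$, you assert this follows from ``the quantitative content of Theorem 9.5 of \cite{BNj}, together with the fact that for $d\in D$ the element $d$ must be uniformly small on the relevant neighborhood of $q$.'' That last fact is precisely the nontrivial point, and it is neither proved in your proposal nor contained in Theorem 9.5 of \cite{BNj} (which constrains $a$, not $d$). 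Concretely, writing $ad = aud + a(1-u)d$, the Urysohn estimate handles $\Vert a(1-u)d\Vert \leq \epsilon\Vert d\Vert$, but you still need that the remaining term can be made small by shrinking the open projection $u\geq q$; the noncommutative analogue of ``a continuous function vanishing on a closed set is uniformly small on some neighborhood'' is a genuine lemma (resting on compactness of the closed projection $q$ in the unital algebra and the machinery of \cite{BRI,BHN}), not bookkeeping. So as written the only nontrivial step of the argument is missing.

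The paper closes this step differently and more economically: it observes that the needed convergence is a purely associative, one-sided statement taking place in the containing $C^*$-algebra $B$, and that the proof of \cite[Corollary 2.25]{BRI} already shows $x_{(u,\epsilon)}b\to b$ in $B$ for $b\in {\rm Ball}(D)$; by symmetry $bx_{(u,\epsilon)}\to b$, and the only Jordan ingredient is averaging the two limits to get $x_{(u,\epsilon)}\circ b\to b$. In other words, no new quantitative extraction from the Jordan Urysohn theorem is required --- the associative estimate applies verbatim because all products are computed in $B$. If you prefer a self-contained argument rather than this citation, you must actually prove the ``uniform smallness near $q$'' assertion for elements of $D$ over the directed set of open projections $u\geq q$, rather than take it as given.
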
 
\begin{proof}  Certainly $x_{(u,\epsilon)}q=(1-a)q=q-q=0,$ 
and similarly $qx_{(u,\epsilon)} = 0$, so that $x_{(u,\epsilon)}\in D.$ Also, $\Vert x_{(u,\epsilon)}\Vert\leq 1,$ indeed $\Vert 1-2x_{(u,\epsilon)}\Vert\leq 1.$ 
The proof in  \cite[Corollary 2.25]{BRI} shows that $x_{(u,\epsilon)}b \to b$ in $B$ for $b\in \ball(D).$   Similarly
$b x_{(u,\epsilon)} \to b$.   So $x_{(u,\epsilon)}b + bx_{(u,\epsilon)}\to 2b$ in $A$.  
Therefore, $(x_{(u,\epsilon)})$ is a Jordan cai for $D.$
\end{proof}

Finally we discuss 4.10--4.12 in \cite{BRord}.  As mentioned towards the end of \cite{BNj} the proof of the first of these 
results mistakenly states that $D$ is
always approximately unital, and this is used in the last lines of the proof and in 4.12.   The rest of the proof is correct and 
generalizes to the Jordan case though (indeed most of this is done explicitly in \cite[Theorem 8.7]{BNj}).     Thus if 
 $D$ is approximately unital, then Theorems 4.10 and 4.12 in \cite{BRord}, and their Jordan algebra generalizations are correct.   That is, we have:
 
We recall that an element $b$ in a Jordan operator algebra $A$  {\em commutes} with a projection $p$ if $p b = b p$ in some $C^*$-algebra
containing $A$ as a Jordan subalgebra. 
 By (1.1) in \cite{BWj} this is equivalent to $b \circ p = pbp$.

\begin{theorem} \label{peakth2}
 Suppose that $A$ is a Jordan operator algebra, $b  \in A$, and 
 $p$ is an open projection in $A^{**}$ commuting with $b$, such that 
  $\Vert p^\perp b p^\perp  \Vert \leq 1$ (here $p^\perp = 1-p$ where $1$ is the identity of the unitization of $A$ if $A$ is nonunital).
Suppose also that $\| p^\perp (1-2b) p^\perp  \| \leq 1$.  
Then  there exists $g \in \frac{1}{2} {\mathfrak F}_A
\subset {\rm Ball}(A)$ commuting with $p$ such that $p^\perp g p^\perp  =  p^\perp b p^\perp$.   Indeed such $g$ may be chosen
`nearly positive' in the sense of the introduction to  {\rm \cite{BRord}}.  
\end{theorem}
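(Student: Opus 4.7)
My plan is to follow the template of \cite[Theorems 4.10 and 4.12]{BRord}, with the bulk of the Jordan-algebraic construction already absorbed into \cite[Theorem 8.7]{BNj}; what remains is a unitization reduction, careful bookkeeping for the corner equation, and the near-positivity refinement.

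First I would pass to $A^1$ if $A$ is nonunital. By \cite[Proposition 6.4]{BNj}, $p$ remains open in $(A^1)^{**}$ and the hypotheses on $b$ transfer unchanged. Since $b$ commutes with $p$, we have the orthogonal decomposition $b = pbp + p^\perp b p^\perp$ in any containing $C^*$-algebra, and the two norm bounds in the statement say precisely that $p^\perp b p^\perp$ lies in $\frac{1}{2}{\mathfrak F}_{p^\perp A^{**} p^\perp}$. The task therefore reduces to replacing $pbp \in pA^{**}p$ by an element of the same corner for which the sum with $p^\perp b p^\perp$ lives in $\frac{1}{2}{\mathfrak F}_A$.

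Next, let $D$ be the HSA of $A$ supported by $p$. For each open projection $u \geq p$ in $A^{**}$ and each $\epsilon > 0$, the Jordan noncommutative Urysohn Theorem~9.5 of \cite{BNj} supplies an element $e_{u,\epsilon} \in \frac{1}{2}{\mathfrak F}_A$ which is close to $p$ on the $p$-corner and essentially zero outside $u$. Forming a net of the form $g_{u,\epsilon} := e_{u,\epsilon} \circ b \circ e_{u,\epsilon} + (1-e_{u,\epsilon}) \circ b \circ (1-e_{u,\epsilon})$ in a containing $C^*$-algebra, the commutation $bp = pb$ together with the Urysohn estimates forces a weak$^*$-cluster point to agree with $p^\perp b p^\perp$ on the $p^\perp$-corner and to lie in $\frac{1}{2}{\mathfrak F}$. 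That this cluster point can be realised in $A$ (and not merely in $A^{**}$), and commutes exactly with $p$, is the content of \cite[Theorem 8.7]{BNj}, which I would invoke directly to produce the $g \in \frac{1}{2}{\mathfrak F}_A$ with $p^\perp g p^\perp = p^\perp b p^\perp$.

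Finally, to upgrade $g$ to a nearly positive element I would follow the averaging trick used just before \cite[Theorem 4.12]{BRord}: given $\delta > 0$, pick a Urysohn element $f \in \frac{1}{2}{\mathfrak F}_D$ supported very close to $p$, and perturb $g \mapsto g + \delta f$ (plus a mild renormalization); since $f \in D$ satisfies $p^\perp f p^\perp = 0$, this alters only the $p$-corner and so preserves the corner equation. I expect the main obstacle to be precisely this last verification in the Jordan setting: in the associative case $p^\perp f p^\perp = 0$ is read off immediately from algebraic identities, whereas here one must rerun the argument through the Jordan product (using, e.g., the identity $p^\perp f p^\perp = f \circ p^\perp - (p \circ f) \circ p^\perp$ from the $xyx \in A$ trick of \cite[(1.1)]{BWj}) to confirm that the perturbation remains in $A$, preserves membership in $\frac{1}{2}{\mathfrak F}_A$, and leaves the corner equation exactly intact.
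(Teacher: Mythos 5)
Your outline defers the decisive step to \cite[Theorem 8.7]{BNj} and in doing so misses the one genuinely new ingredient this theorem needs. As the paper explains just before the statement, the proof of \cite[Theorem 4.10]{BRord} has a gap exactly at the point you want to quote: it mistakenly asserts that the algebra $D=\{x\in A\cap C: q\circ x=0\}$ (with $q=1-p$ and $C$ the algebra of elements commuting with $p$) is approximately unital, and it is precisely the last lines of that proof --- the lines that produce $g\in\frac{1}{2}{\mathfrak F}_A$ with $p^\perp g p^\perp=p^\perp b p^\perp$ and allow the nearly positive choice --- which use this. Consequently \cite[Theorem 8.7]{BNj} does \emph{not} already yield ``the $g\in\frac{1}{2}{\mathfrak F}_A$ with the corner equation''; if it did, the present theorem would require no proof. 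The paper's actual argument is: identify $D$ with $\{x\in A: x=pxp\}$, the hereditary subalgebra of $A$ supported by the open projection $p$, hence approximately unital; then rerun the last few lines of the \cite{BRord} proof, i.e.\ regard $p^\perp b p^\perp$ as an element of the quotient of the algebra of elements of $A$ commuting with $p$ by the approximately unital Jordan ideal $D$, and lift it using \cite[Proposition 3.28]{BWj} (the Jordan analogue of $q({\mathfrak F}_A)={\mathfrak F}_{A/J}$). The near positivity of $g$ comes from this lifting machinery, not from a posteriori surgery. None of this appears in your proposal.

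The substitute constructions you offer would not close the gap. A weak* cluster point of your net $g_{u,\epsilon}=e_{u,\epsilon}\circ b\circ e_{u,\epsilon}+(1-e_{u,\epsilon})\circ b\circ(1-e_{u,\epsilon})$ (which, incidentally, needs parenthesizing, since the Jordan product is not associative) lies only in $A^{**}$, and no argument is given for landing in $A$, for exact membership in $\frac{1}{2}{\mathfrak F}_A$, or for exact commutation with $p$ --- that is the hard part, and attributing it wholesale to \cite[Theorem 8.7]{BNj} is a misattribution for the reason above. Finally, the perturbation $g\mapsto g+\delta f$ with $f\in\frac{1}{2}{\mathfrak F}_D$ does not make $g$ nearly positive in the sense of the introduction to \cite{BRord}: adding a small element supported in the $p$-corner does not repair a far-from-positive $p$-corner of $g$, and any ``mild renormalization'' (e.g.\ dividing by $1+\delta$) scales the $p^\perp$-corner and destroys the exact identity $p^\perp g p^\perp=p^\perp b p^\perp$. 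The correct route to near positivity is to choose the lift nearly positive when applying the quotient-lifting result, as in \cite{BRord}.
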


\begin{proof}   As we said above most of this proof is done explicitly in \cite[Theorem 8.7]{BNj}.
In the latter paper in the present setting the algebra $D$ in the proof of  \cite[Theorem 4.10]{BRord} is
written as $\{ x \in A \cap C : q \circ x = 0 \}$ where $q = 1-p$.
So $D$ equals, by facts mentioned
in \cite[Theorem 8.7]{BNj}, 
$$\{ x \in A \cap C : x = p x p \} = A \cap C \cap \tilde{D} = A  \cap \tilde{D} = 
\{ x \in A  : x = p x p \},$$ 
the HSA in $A$ with support $p$.  So $D$ is approximately unital.   Now  the reader familiar with \cite{BWj,BNj} will have no trouble 
checking the Jordan variant of the last few lines of the proof of  \cite[Theorem 4.10]{BRord}, 
 using \cite[Proposition 3.28]{BWj} in place
of the analogous result used there.
(We remark that even in the case that $p= 1-q \notin A^{**}$ the quotient  $I/D$ may be viewed by the 
centered equation in the proof, as
a Jordan subalgebra (even Jordan ideal) of  $C/J \subset qC^{**} \subset q (A^1)^{**} q$, hence $I/D$ is a Jordan operator algebra.)
\end{proof}

\begin{lemma} \label{mustuse} {\rm (Cf.\ \cite[Lemma 4.11]{BRord}.)} \ 
	Suppose that $A$ and $B$ are closed Jordan subalgebras of unital Jordan operator algebras $C$ and $D$ respectively, with $1_C\notin A$ and $1_D\notin B,$ and $q:A\to B$ is a $1$-quotient map 
	(that is, induces an isometry $A/\ker(q) \to B$) and Jordan homomorphism such that $\ker(q)$ is approximately unital. Then the unique unital extension of $q$ to a unital map from $A+\Cdb 1_C$ to $B+\Cdb 1_D$ is a $1$-quotient map.	
\end{lemma}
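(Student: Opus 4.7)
The plan is to deduce the conclusion essentially directly from the Jordan version of Meyer's unitization theorem, in the form of the identification $(A + \Cdb 1_C)/J = (A/J)^1$ that was already invoked in the proof of Corollary \ref{qra} above (and which is attributed there to the Jordan operator algebra case of Meyer's theorem, Section 2.2 of \cite{BWj}). In other words, I would not try to construct quasi-inverses or manipulate a Jordan contractive approximate identity of $\ker(q)$ by hand; instead I would let Meyer's uniqueness do the work.

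Concretely, set $J = \ker q$. Since $1_D \notin B$, the formula $\tilde q(a + \mu 1_C) = q(a) + \mu 1_D$ forces $\ker \tilde q = J$, so proving the lemma reduces to showing that the induced map $\bar{\tilde q}\colon (A + \Cdb 1_C)/J \to B + \Cdb 1_D$ is an isometry. I would then verify the hypotheses needed for Meyer's uniqueness: (a) $J$ is an approximately unital Jordan ideal of the unital Jordan operator algebra $A + \Cdb 1_C$, so by Proposition 3.27 of \cite{BWj} the quotient $(A + \Cdb 1_C)/J$ is a unital Jordan operator algebra; (b) the isometric inclusion $A \hookrightarrow A + \Cdb 1_C$ induces an isometric Jordan embedding $A/J \hookrightarrow (A + \Cdb 1_C)/J$; and (c) combining (b) with the 1-quotient hypothesis on $q$ identifies $B$ isometrically with a closed Jordan subalgebra of $(A + \Cdb 1_C)/J$, of codimension one and not containing the unit (the latter holding because $1_C \notin A$).

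With these ingredients in hand, both $(A + \Cdb 1_C)/J$ and $B + \Cdb 1_D$ are unital Jordan operator algebras containing $B$ isometrically as a codimension-one Jordan subalgebra not containing the identity. Meyer's uniqueness (Jordan version) then produces an isometric unital Jordan isomorphism between them that restricts to the identity on $B$, and by uniqueness of unital extensions of $q$ this isomorphism coincides with $\bar{\tilde q}$; hence $\bar{\tilde q}$ is an isometry. The only real bookkeeping is the codimension-one identification and the isometric embedding $A/J \hookrightarrow (A + \Cdb 1_C)/J$; the conclusion then follows immediately from Meyer's uniqueness, so I do not foresee a serious obstacle beyond citing the results just mentioned.
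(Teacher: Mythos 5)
Your proposal is correct and follows essentially the same route as the paper: pass to the induced map $(A+\Cdb 1_C)/\ker(q)\to B+\Cdb 1_D$, observe that both sides are unitizations of $B\cong A/\ker(q)$ (the approximate unitality of $\ker(q)$ being exactly what makes the quotient a Jordan operator algebra, via \cite[Proposition 3.27]{BWj}), and conclude by uniqueness of the unitization. The only detail the paper spells out that you subsume under ``Meyer's uniqueness'' is the case where $B$, and hence $A/\ker(q)$, is itself unital, which is handled not by \cite[Corollary 2.5]{BWj} (the nonunital Meyer-type statement) but by the easy uniqueness of the unitization of an already unital Jordan operator algebra.
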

\begin{proof}
	Let $J=\ker(q)$, let $\tilde{q}:A/J\to B$ be the induced isometry, and let $\theta:A+\Cdb1_C\to B+\Cdb 1_D$ be the unique unital extension of $q.$ This gives a one-to-one Jordan homomorphism $\tilde{\theta}: (A+\Cdb1_C)/J\to B+\Cdb 1_D$ which equals $\tilde{q}$ on $A/J.$ If $B,$ and hence $A/J$, is not unital, then $\tilde{\theta}$ is an isometric Jordan isomorphism by \cite[Corollary 2.5]{BWj}. Similarly, if $B$ is unital, then $\tilde{\theta}$ is an isometric Jordan isomorphism by the uniqueness of the unitization of an already unital Jordan operator algebra. So in either case we may deduce that $\tilde{\theta}$ is an isometric Jordan isomorphism and $\theta$ is a $1$-quotient map.	
\end{proof}

The following generalizes part of Corollary \ref{qra}.

\begin{theorem} \label{peakthang2}  {\rm (A noncommutative Tietze theorem)} \
 Suppose that $A$ is a Jordan  operator algebra
(not necessarily approximately unital),
and that  $p$ is an open projection in $A^{**}$.  Set $q = 1-p \in (A^1)^{**}$.
Suppose that $b   \in A$ commutes with $p$, and $\Vert q b q \Vert \leq 1$, and that  the numerical range of 
$q bq$ (in $q (A^1)^{**}q$ or $(A^1)^{**}$) 
is contained in a compact convex set $E$ in the plane.  We also suppose, by
fattening it slightly if necessary, that $E$ is not a line segment.  Then there exists $g \in {\rm Ball}(A)$ commuting with $p$,
  with $q g q  = q b q$, 
such that  the numerical range of 
$g$ with respect to  $A^1$ is contained in $E$.
\end{theorem}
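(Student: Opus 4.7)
The plan is to apply Proposition \ref{tri} to the unital Jordan subalgebra $C := \{a \in A^1 : ap = pa\}$ of $A^1$ with the approximately unital Jordan ideal $D := \{a \in A : a = pap\}$ (the HSA in $A$ with support $p$), lifting $b + D \in C/D$ to a suitable $g$.

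First I would verify the structural ingredients. Since $b$ commutes with $p$ we have $b \in C$; elements of $D$ satisfy $pd = d = dp$ so $D \subseteq C$; and $D$ is a Jordan ideal in $C$ because for $c \in C$ and $d \in D$ the $p$-$q$ block decompositions (with off-diagonal blocks vanishing by commutativity with $p$) give $c \circ d = \frac{1}{2}(pcpdp + pdpcp) \in pA^{**}p$, while $c \circ d \in A$ since $A$ is closed under the Jordan product. Hence $c \circ d \in A \cap pA^{**}p = D$, and the quotient $C/D$ is a Jordan operator algebra by \cite[Proposition 3.27]{BWj}.

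Next I would transfer the numerical range and norm hypotheses from $qbq$ to $b + D$. The compression $\pi : C \to q(A^1)^{**}q$, $\pi(a) := qaq$, is a unital Jordan homomorphism with kernel $D$ (in the nonunital case, $qaq = 0$ with $a = \lambda 1 + a_0 \in C$, $a_0 \in A$, forces $\lambda q + qa_0 q = 0$; since $qa_0 q \in A^{**}$ and $q \notin A^{**}$, this gives $\lambda = 0$ and $a_0 \in A \cap pA^{**}p = D$; the unital case is immediate). The induced injective Jordan homomorphism $\hat\pi : C/D \to q(A^1)^{**}q$ is unital and contractive, and by Hahn--Banach extension of states through the unital Jordan subalgebra inclusion $\hat\pi(C/D) \subseteq q(A^1)^{**}q$ the numerical ranges satisfy $W_{C/D}(b + D) = W_{\hat\pi(C/D)}(qbq) = W_{q(A^1)^{**}q}(qbq) \subseteq E$. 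The key technical point, which I expect to be the main obstacle, is showing that $\|b + D\| \leq \|qbq\| \leq 1$ in the quotient; equivalently, producing $d \in D$ with $\|pbp + d\|$ at most $\|qbq\|$ (the reverse inequality being just contractivity of $\pi$). This should follow from the M-ideal structure of $D$ in $A^1$ from \cite[Theorem 3.25]{BWj} together with the HSA quotient machinery in \cite{BWj,BNj}, via block computations using a Jordan cai $(e_\lambda)$ of $D$: writing $b - b \circ e_\lambda$ in $p$-$q$ block form, one sees that $\|b - b \circ e_\lambda\| = \max(\|pbp - (pbp) \circ e_\lambda\|, \|qbq\|)$, and the remaining task is to show that the first term is eventually bounded by $\|qbq\|$.

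Finally, I invoke Proposition \ref{tri} inside $C$ with ideal $D$: if $W_{C/D}(b+D)$ is not a line segment, part (i) yields $g \in C$ with $g + D = b + D$, $\|g\| = \|b + D\| \leq 1$, and $W_C(g) = W_{C/D}(b+D) \subseteq E$; if it is a line segment, part (ii) yields $g$ with $W_C(g)$ inside a thin triangle which, since $E$ is convex and not a line segment, may be chosen inside $E$. Writing $g = b + d$ with $d \in D \subseteq A$ gives $g \in A$ commuting with $p$ (since $g \in C$), $qgq = qbq$ (since $g - b \in D \subseteq pA^{**}p$), $\|g\| \leq 1$, and $W_{A^1}(g) = W_C(g) \subseteq E$ (numerical ranges agreeing under the unital Jordan subalgebra inclusion $C \subseteq A^1$, again by Hahn--Banach).
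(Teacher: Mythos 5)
Your overall strategy (lift $b$ through the quotient of the relative commutant $C=\{a\in A^1: ap=pa\}$ by the HSA $D$ supported by $p$, using Proposition \ref{tri}) is essentially the paper's route, but the step you yourself flag as ``the main obstacle'' is a genuine gap, and it is the heart of the matter: you never prove that $\Vert b+D\Vert_{C/D}\leq \Vert qbq\Vert$, equivalently that the induced map $\hat\pi: C/D\to q(A^1)^{**}q$ is isometric. Your proposed route via the Jordan cai $(e_\lambda)$ of $D$ does not close it: in the block computation the term $\Vert pbp-(pbp)\circ e_\lambda\Vert$ need not become small or be dominated by $\Vert qbq\Vert$ by any soft argument, because $pbp$ lies only in $D^{\perp\perp}=pA^{**}p$, and $e_\lambda\to p$ only in the weak* topology, so the cai gives no norm control on $pbp$; the inequality you need is true, but proving it is essentially equivalent to the isometry you are trying to establish. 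Moreover, without that isometry your numerical range identification also collapses: a unital map that is merely contractive and injective only yields $W_{q(A^1)^{**}q}(qbq)\subseteq W_{C/D}(b+D)$ (states pull back along $\hat\pi$), whereas you need the reverse inclusion $W_{C/D}(b+D)\subseteq E$, and the Hahn--Banach extension of states of $C/D$ through $\hat\pi$ requires $\hat\pi$ to be isometric. So both the norm bound $\Vert g\Vert\leq 1$ and the numerical range containment rest on the missing step.

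The way to supply it is the identification that the paper imports from the machinery of \cite[Theorems 4.10 and 4.12]{BRord} and \cite[Theorem 8.7]{BNj} (see also the parenthetical remark at the end of the proof of Theorem \ref{peakth2}), combined with Lemma \ref{mustuse} when passing to unitizations: the identity $p$ of $D^{\perp\perp}$ commutes with every element of $C$, hence acts centrally on the weak* closure of $C$ in $(A^1)^{**}$; since $D$ is an approximately unital Jordan ideal in $C$, one gets $(C/D)^{*}{}^{*}\cong C^{**}/D^{\perp\perp}$ identified with the compression by $q$, so that $\Vert x+D\Vert=\Vert qxq\Vert$ for $x\in C$ and states of $C/D$ are exactly those implemented on $q(\cdot)q$. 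With that in hand, the remainder of your argument --- that $D$ is a Jordan ideal in $C$, the application of Proposition \ref{tri} (using a thin triangle inside $E$ in the line-segment case, which is possible since $E$ is convex and not a segment), the observation that $g-b\in D\subseteq A$ so $g\in A$ and $qgq=qbq$, and the restriction of states giving $W_{A^1}(g)\subseteq W_C(g)\subseteq E$ --- is correct and matches the paper's proof in substance, with the minor simplification that by working inside the unital algebra $C$ you absorb part of the role played by Lemma \ref{mustuse}.
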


\begin{proof}   Similar remarks as in the  proof of Theorem \ref{peakth2} apply here; except in addition 
one must use
Proposition \ref{tri} and Lemma \ref{mustuse}  in place
of the analogous result used in the proof of  \cite[Theorem 4.12]{BRord}.
Also one uses an obvious Jordan operator algebra generalization of
the direct sum of Banach algebras trick used there (see also the proof of 
Proposition \ref{tri} above).  \end{proof}

\end{document}